\theoremstyle{plain}
\newcommand{\rre}{\mathbb{R}}
\newcommand{\pt}{\partial}
\numberwithin{equation}{section}
\newtheorem{thm}{Theorem}[section]
\newtheorem{lem}[thm]{Lemma}
\theoremstyle{remark}
\newtheorem{rem}{Remark}
\title[Zakharov-Kuznetsov equation in 2D] 
{Existence of wave operators for Zakharov-Kuznetsov\\
equation in two space dimensions}
\author{Jun-ichi Segata}
\address{Faculty of Mathematics, Kyushu University, 
Fukuoka, 819-0395, Japan}
\email{segata@math.kyushu-u.ac.jp}
\keywords{Zakharov-Kuznetsov equation, scattering}
\subjclass[2010]{Primary 35Q53, Secondary 35P25}
\begin{document}

\maketitle


\begin{abstract}
In this paper, we study long time behavior of solution to 
the two dimensional Zakharov-Kuznetsov equation in the 
framework of the final state problem. 
We construct a small global solution to the Zakharov-Kuznetsov 
equation which scatters to a given free solution. 
From this result, we have the existence of wave operators for 
the Zakharov-Kuznetsov equation. 
The proof is based on the space-time resonance method developed by 
Gustafson-Nakanishi-Tsai \cite{GNT2} and 
Germain-Masmoudi-Shatah \cite{GMS1,GMS2} etc.

%

\end{abstract}

%
%
\section{Introduction}

In this paper, we study long time behavior of solution to 
the Zakharov-Kuznetsov equation 
in two dimensions:
\begin{equation} 
\pt_tu+\pt_x^3u+\pt_x\pt_y^2u=\pt_x(u^2), \qquad(t,x,y)\in\rre\times\rre^2,
\label{ZK1} 
\end{equation} 
where
$u:\rre\times\rre^{2}\to\mathbb{R}$ is an unknown
function. 
Equation (\ref{ZK1}) in the three dimensional case was 
derived by Zakharov-Kuznetsov \cite{ZK} to describe 
unidirectional wave propagation in a magnetized plasma. 
Equation (\ref{ZK1}) in the two dimensional case was 
derived by Laedke-Spatschek \cite{LSp} from the basic 
hydrodynamical equations. 
Lannes-Linares-Saut \cite{LLS} gave the rigorous justification 
of (\ref{ZK1}) from the Euler-Poisson system for a uniformly 
magnetized media. 

Equation (\ref{ZK1}) has the conservation of mass
\begin{equation} \label{mass}
\|u(t)\|_{L_{x}^{2}} = \|u(0)\|_{L_{x}^{2}},
\end{equation}
and the conservation of energy
\begin{eqnarray} 
E[u](t)&:=&\frac12
\int\!\!\!\int_{\rre^2}\left\{(\pt_xu)^2+(\pt_yu)^2\right\}dxdy
-\frac16\int\!\!\!\int_{\rre^2}u^3 dxdy=E[u](0).
\label{energy}
\end{eqnarray}

The main purpose of this paper is to study the asymptotic behavior 
in time of solution to (\ref{ZK1}). 
We first review the related results on the well-posedness and 
scattering. 
For the three dimensional case, Linares-Saut \cite{LS} 
proved the time local well-posedness in $H^s(\rre^3)$ 
with $s>9/8$. 
Ribaud-Vento \cite{RV} 
has shown the local well-posedness in $H^s(\rre^3)$ 
with $s>1$ by using the local smoothing effect and the maximal 
function estimates for the solution to the linearized equation of (\ref{ZK1}). 
Recently, Herr-Kinoshita \cite{HK2} proved the local well-posedness 
in $H^s(\rre^3)$ with full scaling sub-critical range 
$s>-1/2$ by the Fourier restriction norm method 
(see also \cite{HK2} for the well-posedness results of (\ref{ZK1}) 
for the high dimensions). 
For the two dimensional case, Faminskii \cite{F1} 
proved the global well-posedness in $H^1(\rre^2)$ by 
applying the contraction mapping principle for the corresponding 
integral equation with an aid of the local smoothing effect 
for the linearized equation. 
Gr\"unrock-Herr \cite{GH} and Molinet-Pilod \cite{MP} 
independently extended this result to $s>1/2$ 
by using the Fourier restriction norm method. 
Furthermore, Kinoshita \cite{K} proved the local 
well-posedness in $H^s(\rre^2)$ with $s>-1/4$ 
by refining the Fourier restriction norm method. 
Especially, from his result and the conservation of mass 
(\ref{mass}), we have the global well-posedness of 
(\ref{ZK1}) in $L^2(\rre^2)$.

To summarize the related results on the scattering, we 
consider the generalized Zakharov-Kuznetsov equation
\begin{equation} 
\pt_tu+\pt_{x_1}\Delta u=\pt_{x_1}(u^k), \qquad(t,x)\in\rre\times\rre^d,
\label{gZK} 
\end{equation} 
where $x=(x_1,\cdots,x_d)\in\rre^d$, $d$, $k\ge2$ are integers, 
and $\Delta$ is the Laplacian with respect to $x$. 
For the high dimensional case, Herr-Kinoshita \cite{HK1} 
proved the small data scattering 
for (\ref{gZK}) with $d\ge5$ and $k=2$ in the scaling critical Sobolev space. 
Furthermore, they proved the  scattering for (\ref{gZK}) with $d=4$ and $k=2$ 
when the initial data is small and radial in the last $(d-1)$ variables. 
For the two dimensional case, Farah-Linares-Pastor \cite{FLP} 
have shown the small data scattering for (\ref{gZK}) with $d=2$ 
and $k\ge4$. Very recently, Anjolras \cite{A} proved 
the small data scattering for the two dimensional 
modified Zakharov-Kuznetsov equation 
(i.e., (\ref{gZK}) with $d=2$ and $k=3$) 
by using the space-time resonance method developed by 
Gustafson-Nakanishi-Tsai \cite{GNT2} and 
Germain-Masmoudi-Shatah \cite{GMS1,GMS2} etc. 
Furthermore, by using the Fourier restriction norm method, 
Kinoshita-Correia\footnote{work in preparation.} obtained 
the small data scattering for the two dimensional 
modified Zakharov-Kuznetsov equation under the situation that 
the initial and the asymptotic states belong to the same class. 

To study the scattering problem for (\ref{ZK1}) in two dimensions 
is more difficult compared to (\ref{gZK}) with $k>1+2/d$ due 
to slow decay of the solution to the linearized equation 
and low power of the nonlinearity. Furthermore, 
from the point of view of the linear scattering theory, 
for the two dimensional case, the quadratic 
nonlinearity in (\ref{ZK1}) belongs to borderline between 
the short and long range cases. Therefore it is non-trivial 
to obtain long time behavior of solution to (\ref{ZK1}) 
in two space dimensions. 
In this paper, we consider the scattering problem for 
(\ref{ZK1}) in the framework of the final state problem. 
More precisely, we prove the existence of small global 
solutions to (\ref{ZK1}) which scatters to a given free solution. 

Following \cite{BKS,GH}, to study (\ref{ZK1})  
we first reduce (\ref{ZK1}) into the equation which is 
symmetric with respect to $x$ and $y$. Let 
$x':=2^{-2/3} x+2^{-2/3}\cdot 3^{1/2} y$, $y':=2^{-2/3} x-2^{-2/3}\cdot 3^{1/2} y$ 
and let $v(x',y'):=2^{-2/3}u(x,y)$. 
By a simple calculation, we see that 
if $u$ is a solution to (\ref{ZK1}), then 
$v$ satisfies 
\begin{eqnarray*}
\pt_tv+(\pt_{x'}^3+\pt_{y'}^3)v
=
(\pt_{x'}+\pt_{y'})(v^2).
\end{eqnarray*}
Henceforth, we consider the scattering 
problem for 
\begin{eqnarray}
\pt_tv+(\pt_{x_1}^3+\pt_{x_2}^3)v
=
(\pt_{x_1}+\pt_{x_2})(v^2),
\qquad(t,x_1,x_2)\in\rre\times\rre^2.
\label{ZK}
\end{eqnarray}
To state the our main theorem, we introduce 
several notation. We define the norm $\|\cdot\|_X$ 
by
\begin{eqnarray}
\|f\|_{X}
&:=&
\|\langle x\rangle \pt_{x_1}^{-\frac12}\pt_{x_2}^{-\frac12}f\|_{W_x^{3,1}}
+\|\langle x\rangle \pt_{x_1}^{-1}f\|_{H_x^3}
+\|\langle x\rangle \pt_{x_2}^{-1}f\|_{H_x^3}
\label{normX}\\
& &+\|\langle x\rangle \pt_{x_1}\pt_{x_2}^{-2}f\|_{H_x^3}
+\|\langle x\rangle \pt_{x_1}^{-2}\pt_{x_2}f\|_{H_x^3}
+\|\pt_{x_1}^{-2}f\|_{H_x^3}
+\|\pt_{x_2}^{-2}f\|_{H_x^3}, 
\nonumber
\end{eqnarray}
where $x=(x_1,x_2)$ and $\langle x\rangle=\sqrt{1+|x|^2}$. 
Let $V(t):=e^{-t(\pt_{x_1}^3+\pt_{x_2}^3)}$  
be a unitary group generated by 
$-\pt_{x_1}^3-\pt_{x_2}^3$. 
Then we have the following. 

\begin{thm}\label{main}
There exists $\varepsilon>0$ such that if 
$v_{\infty}$ satisfies $\|v_{\infty}\|_{X}\le\varepsilon$, 
then there exists a unique global solution 
$v\in C(\rre; H^1(\rre^2))$ to (\ref{ZK}) satisfying
\begin{eqnarray}
\|v(t)-V(t)v_{\infty}\|_{H_x^2}
\lesssim\varepsilon t^{-\alpha}\label{scattering}
\end{eqnarray}
for any $t>0$, where $\alpha>2/3$. 
Similar result holds for $t<0$.
\end{thm}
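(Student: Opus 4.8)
The plan is to recast the final-value problem for \eqref{ZK} as a Duhamel integral equation anchored at $t=+\infty$. Since $V(t)$ solves the linear flow, any solution scattering to $V(t)v_\infty$ must satisfy
\begin{equation}
v(t)=V(t)v_\infty-\int_t^\infty V(t-s)(\pt_{x_1}+\pt_{x_2})(v(s)^2)\,ds.
\end{equation}
Passing to the profile $f(t):=V(-t)v(t)$ turns this into $f(t)=v_\infty-\int_t^\infty V(-s)(\pt_{x_1}+\pt_{x_2})(V(s)f(s))^2\,ds$. I would run a contraction-mapping argument for the map given by the right-hand side, on a small ball in a time-weighted space $Y$ whose norm combines control of $v(t)-V(t)v_\infty$ at the regularity dictated by \eqref{normX} with the decay factor $t^{-\alpha}$ and with the weighted quantities of $\|\cdot\|_X$ propagated in time. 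Smallness of $\|v_\infty\|_X$ then produces a unique fixed point; the membership $v\in C(\rre;H^1)$ and the bound \eqref{scattering} are read off from $Y$, and the statement for $t<0$ follows from the symmetry $w(t,x):=v(-t,-x)$, which solves the same equation.

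The two analytic inputs are a dispersive decay estimate for $V(t)$ and a bilinear estimate for the Duhamel term. On the Fourier side $\widehat{V(t)f}(\xi)=e^{it(\xi_1^3+\xi_2^3)}\hat f(\xi)$, so the phase is $\phi(\xi)=\xi_1^3+\xi_2^3$ with Hessian $6\,\mathrm{diag}(\xi_1,\xi_2)$. Away from the axes this is nondegenerate and yields the generic $t^{-1}$ stationary-phase decay, but it degenerates on $\{\xi_1=0\}\cup\{\xi_2=0\}$; this degeneracy slows the decay and is responsible for the threshold $\alpha>2/3$. The negative-derivative factors $\pt_{x_1}^{-1},\pt_{x_2}^{-1},\pt_{x_1}^{-2},\dots$ together with the weight $\LR{x}$ in \eqref{normX} are tailored precisely to control the degenerate and low-frequency regions and to supply the vector-field (frequency-differentiation) bounds needed below, and I would establish weighted $L^\infty$ decay of $V(t)v_\infty$ out of $\|v_\infty\|_X$ by stationary phase with careful treatment of the axes.

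For the nonlinear term I would invoke the space-time resonance method. The Duhamel integrand on the profile side reads, up to the symbol $i(\xi_1+\xi_2)$, as the oscillatory double integral $\int_t^\infty\!\!\int e^{is\Phi(\xi,\eta)}\hat f(s,\eta)\hat f(s,\xi-\eta)\,d\eta\,ds$ with resonance function
\begin{equation}
\Phi(\xi,\eta)=-3\bigl[\xi_1\eta_1(\xi_1-\eta_1)+\xi_2\eta_2(\xi_2-\eta_2)\bigr].
\end{equation}
The time-resonant set is $\{\Phi=0\}$ and the space-resonant set is $\{\nabla_\eta\Phi=0\}=\{\eta_j\in\{0,\xi_j/2\}\}$; on the bulk sheet $\eta=\xi/2$ one computes $\Phi=-\tfrac34(\xi_1^3+\xi_2^3)$, so the space-time resonant set lies in $\{\xi_1^3+\xi_2^3=0\}=\{\xi_2=-\xi_1\}$, exactly where the nonlinear symbol $\xi_1+\xi_2$ vanishes. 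This null structure is the decisive gain: on the time-nonresonant region I would integrate by parts in $s$ (a normal-form transform), dividing by $\Phi$; on the space-nonresonant region I would integrate by parts in $\eta$ using the $\LR{x}$-weight, dividing by $\nabla_\eta\Phi$; and near the space-time resonance, where neither maneuver gains, the vanishing factor $\xi_1+\xi_2$ compensates the loss.

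The hard part will be closing the bilinear estimate uniformly on the overlap region surrounding the space-time resonant set, where the small divisors $\Phi$ and $\nabla_\eta\Phi$, the vanishing symbol $\xi_1+\xi_2$, and the degeneracy of $\phi$ along the axes all interact. This is precisely the borderline between short- and long-range behavior flagged in the introduction, and it is where the norm $X$ must carry just enough negative derivatives and $\LR{x}$-weight to absorb every small-divisor loss while still delivering the rate $\alpha>2/3$. I would also verify that the boundary contribution at $s=t$ and the time-integrability of the post-normal-form integrand are controlled by $Y$, after which the linear decay and bilinear bounds assemble into a contraction and yield Theorem~\ref{main}.
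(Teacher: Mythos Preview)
Your identification of the null structure---that the nonlinear symbol $\xi_1+\xi_2$ vanishes on the space--time resonant set $\{\eta=\xi/2,\ \xi_1+\xi_2=0\}$---and of space--time resonance as the key tool is correct. But the architecture you propose is genuinely different from the paper's, and the difference is where the real difficulty sits.

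You plan a direct contraction for $v$ (equivalently for the profile $f(t)=V(-t)v(t)$), propagating the weighted $X$-type quantities in time so that the integration-by-parts maneuvers can be rerun at each step. The paper does \emph{not} do this. It subtracts the \emph{second} Picard iterate: with $v_1=V(t)v_\infty$ and $v_2=-(\pt_{x_1}+\pt_{x_2})\int_t^\infty V(t-\tau)(V(\tau)v_\infty)^2\,d\tau$, the remainder $w=v-v_1-v_2$ satisfies an equation whose forcing is $(\pt_{x_1}+\pt_{x_2})(2v_1v_2+v_2^2)$; the dangerous $v_1^2$ term is gone by construction. The entire space--time resonance analysis is thereby confined to the single \emph{fixed} bilinear expression $v_2$, depending only on $v_\infty$, and delivers $\|v_2(t)\|_{H_x^3}\lesssim t^{-1}\|v_\infty\|_X^2$ (Lemmas~\ref{xF} and~\ref{cor:v2}). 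After that, $w$ is controlled by a classical energy estimate plus Strichartz in the norm $\sup_{t\ge T}t^{\alpha}(\|w(t)\|_{H^2_x}+\|w\|_{L^3_\tau(t,\infty;W^{1,\infty}_x)})$; existence is obtained by regularization and compactness, not contraction, and the $X$-norm is never evaluated on anything but the fixed data $v_\infty$.

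This is not a cosmetic distinction. Your scheme requires carrying norms like $\|\langle x\rangle\pt_{x_1}^{-1/2}\pt_{x_2}^{-1/2}\,\cdot\,\|_{W^{3,1}}$ and $\|\pt_{x_j}^{-2}\,\cdot\,\|_{H^3}$ on the time-dependent profile, and it is far from clear that the Duhamel map respects such a space: negative antiderivatives and $L^1$-based weighted norms do not interact well with products, and integration by parts in $\eta$ now also hits $\pt_s f(s)$. The sentence ``weighted quantities of $\|\cdot\|_X$ propagated in time'' is exactly where your outline has a gap; the paper's second-iterate subtraction is the device that removes this obstacle entirely and reduces the borderline analysis to a single clean bilinear lemma on fixed inputs.
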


\vskip2mm

From Theorem \ref{main}, we have that for sufficiently small 
$\varepsilon>0$
the wave operator $B_{\varepsilon}\ni v_{\infty}\mapsto v(0)\in H_x^1$ 
is well-defined, where $B_{\varepsilon}=\{f\in X\ ;\ \|f\|_X\le\varepsilon\}$. 

\vskip2mm

We now give the outline of the proof of 
Theorem \ref{main}. 
For given final data $v_{\infty}$, 
we introduce a new unknown function
\begin{eqnarray*} 
w(t,x):=v(t,x)-v_1(t,x)-v_2(t,x),
\end{eqnarray*} 
where $v_1$ and $v_2$ are given by 
\begin{eqnarray} 
v_1(t,x)&=&[V(t)v_{\infty}](x),\label{v1}\\
v_2(t,x)&=&-
(\pt_{x_1}+\pt_{x_2})\int_t^{\infty}
V(t-\tau)[v_1(\tau)^2]d\tau\label{v2}\\
&=&-
(\pt_{x_1}+\pt_{x_2})\int_t^{\infty}
V(t-\tau)[(V(\tau)v_{\infty})^2]d\tau.
\nonumber
\end{eqnarray} 
Let us derive the evolution equation for $w$. 
Let ${{\mathcal L}}=\pt_t+\pt_{x_1}^3+\pt_{x_2}^3$. 
Since ${{\mathcal L}}v_1=0$ and 
${{\mathcal L}}v_2=
(\pt_{x_1}+\pt_{x_2})(v_1^2)$, 
we have   
\begin{eqnarray*} 
{{\mathcal L}}w&=&{{\mathcal L}}v-{{\mathcal L}}v_1-{{\mathcal L}}v_2\\
&=&{{\mathcal L}}v-
(\pt_{x_1}+\pt_{x_2})(v_1^2).
\end{eqnarray*} 
If $v$ satisfies (\ref{ZK}), then 
\begin{eqnarray*} 
{{\mathcal L}}v
&=&
(\pt_{x_1}+\pt_{x_2})(v^2)\\
&=&
(\pt_{x_1}+\pt_{x_2})\{(w+v_1+v_2)^2\}\\
&=&
(\pt_{x_1}+\pt_{x_2})\{w^2+2(v_1+v_2)w+v_1^2+2v_1v_2+v_2^2\}\\
&=&
(\pt_{x_1}+\pt_{x_2})(w^2)+2
(\pt_{x_1}+\pt_{x_2})\{(v_1+v_2)w\}\\
& &
+
(\pt_{x_1}+\pt_{x_2})(v_1^2+2v_1v_2+v_2^2).
\end{eqnarray*} 
Hence we see that $w$ satisfies 
\begin{eqnarray} 
\partial_tw+(\pt_{x_1}^3+\pt_{x_2}^3)w
&=&
(\pt_{x_1}+\pt_{x_2})(w^2)
+2
(\pt_{x_1}+\pt_{x_2})\left\{(v_1+v_2)w\right\}
\label{ZK11}\\
& &+
(\pt_{x_1}+\pt_{x_2})(2v_1v_2+v_2^2).
\nonumber
\end{eqnarray}
To show Theorem \ref{main}, we prove the existence  
of solution $w$ to (\ref{ZK11}) with $w\to0$ in $H^2$ 
as $t\to\infty$. We first note that 
$w$ and $v_1$ can be easily estimated by 
the energy and Strichartz estimates 
(see Lemma \ref{lemL}, below). 
The main difficulty to prove Theorem \ref{main} 
lies on estimate for $v_2$. More precisely, we need 
$L^2$ estimate for the bilinear oscillatory integral 
\begin{eqnarray} 
v_2=-
(\pt_{x_1}+\pt_{x_2})\int_t^{\infty}V(t-\tau)
\left[(V(\tau)v_{\infty})^2\right]d\tau.
\label{u1}
\end{eqnarray}  
Since the solution to the linearized equation of (\ref{ZK}) 
decays like $O(t^{-1})$ in $L^{\infty}$, the H\"{o}lder 
inequality only yields that integrand of (\ref{u1}) is  
$O(t^{-1})$ in $L^2$ which is insufficient because this is not 
integrable in time variable. To derive $L^2$ 
estimates for (\ref{u1}), we employ so called 
space-time resonance method which is developed by 
\cite{GMS1,GMS2,GNT2} etc. 
More precisely, to evaluate (\ref{u1}), we crucially use 
the ``null structure" of the nonlinear term 
which can be represented as the algebraic identity 
(\ref{key1}) below, and  
integration by parts both in space and time variables. 
In this step, we use the boundedness theorem by Coifman-Meyer 
\cite{CoifmanMeyer} for a bilinear multiplier operator. 
%
Once we obtain $L^2$ estimate for $v_2$, 
we have an existence of solution to (\ref{ZK11}) 
by the compactness argument.

Note that the space-time resonance method 
is now widely used to study usual/modified scattering 
for various nonlinear hyperbolic and dispersive PDEs in the framework of 
the initial value problem, see \cite{A,GMS1,GMS2,GNT2} 
for instance. In this paper we use this approach 
to study the scattering problem of (\ref{ZK1}) 
in the framework of the final state problem. 
We believe that for the small initial data, the 
corresponding solution to (\ref{ZK1}) scatters 
to the free solution. Proving this fact is an interesting open question.

%
%
%

We introduce several notations and function spaces 
which are used throughout this paper. 
For $f\in{{\mathcal S}}'(\rre^{d})$, $\hat{f}(\xi)$ 
denotes the Fourier transform of $f$. Let 
$\langle\xi\rangle=\sqrt{|\xi|^2+1}$. The differential operator 
$\langle\nabla\rangle^s=(1-\Delta)^{s/2}$ denotes 
the Bessel potential of order $-s$. 
For $1\le q,r\le\infty$, $L^q(t,\infty;L_x^r)$ is defined 
as follows:
\begin{eqnarray*}
L^q(t,\infty;L_x^r)&=&\{u\in{{\mathcal S}}'(\rre^{1+2});
\|u\|_{L^q(t,\infty;L_x^r)}<\infty\},\\
\|u\|_{L^q(t,\infty;L_x^r)}&=&
\left\|\|u(\tau)\|_{L_x^r}\right\|_{L_{\tau}^q(t,\infty)}.
\end{eqnarray*}
We will use the inhomogeneous Sobolev spaces
\begin{eqnarray*}
W^{s,p}=\lbrace f \in \mathcal S'(\rre^2); 
\|f\|_{W^{s,p}}=\| 
\langle \nabla\rangle^{s} f\|_{L^p}<\infty\rbrace,
\end{eqnarray*}
where $s\in\rre$ and $1\le p\le\infty$. 
We denote 
$H^s=W^{s,2}$. 
We denote $A \lesssim B$ if there exists a constant $C > 0$ 
such that $A \le C B$ holds and $ A \sim B $ if $ A \lesssim B \lesssim A $.

The outline of the paper is as follows. In Section 2, we 
give the decay and Strichartz estimates for the  
linearized equation of (\ref{ZK}). In Section 3, we derive 
the key bilinear dispersive estimates. Finally,
in Section 4, we prove Theorem \ref{main}.

\vskip2mm

%
%
\section{Linear Dispersive Estimates}

In this section we derive the linear estimates associated with 
(\ref{ZK}): 
\begin{equation} 
\left\{
\begin{array}{l}
\partial_tw+(\pt_{x_1}^3+\pt_{x_2}^3)w=0,\qquad\quad (t,x_1,x_2)\in
\mathbb{R}\times\mathbb{R}^{2}, \\
w(0,x_1,x_2)=f(x_1,x_2),\qquad\ \ \ (x_1,x_2)\in\mathbb{R}^{2}.
\end{array}
\right.\label{LZK} 
\end{equation}
Let us recall that $V(t)=e^{-t(\pt_{x_1}^3+\pt_{x_2}^3)}$  
is the  unitary group generated by 
$-\pt_{x_1}^3-\pt_{x_2}^3$. Then, the solution to (\ref{LZK}) 
can be written as $V(t)f$. 

We have the following decay and the Strichartz estimates for 
(\ref{LZK}).

\vskip2mm

\begin{lem}\label{lemL} 
(i) Let $2\le p\le\infty$. Then for any $t>0$, we have
\begin{eqnarray}
\|V(t)f\|_{L_{x}^p}
&\lesssim&t^{-\frac23(1-\frac2p)}
\|f\|_{L_{x}^{p'}},\label{linear1}\\
\||\pt_{x_1}|^{\frac12-\frac1p}
|\pt_{x_2}|^{\frac12-\frac1p}V(t)f\|_{L_{x}^p}
&\lesssim&t^{-(1-\frac2p)}
\|f\|_{L_{x}^{p'}},\label{linear2}
\end{eqnarray}
where $p'$ is the H\"older conjugate exponent of $p$.

\vskip2mm
\noindent
(ii) Let $(q_{j},r_{j})$ ($j=1,2$) satisfy 
$3/q_{j}+2/r_{j}=1$ and $2\le r_{j}\le\infty$. Then, we have 
\begin{equation}
\left\|
\int_{\tau}^{\infty}V(\tau-\tau')F(\tau')d\tau'
\right\|_{L_{\tau}^{q_{1}}(t,\infty;L_{x}^{r_{1}})}
\lesssim\|
F\|_{L_{\tau}^{q_{2}'}(t,\infty;L_{x}^{r_{2}'})}.
\label{linear3}
\end{equation}
\end{lem}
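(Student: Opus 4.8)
The plan is to prove the two parts of Lemma~\ref{lemL} separately, treating the decay estimates in (i) via stationary phase applied to the oscillatory integral kernel of $V(t)$, and the Strichartz estimates in (ii) via the standard $TT^{*}$ machinery built on top of (i).

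For part (i), I would first exploit the product structure of the phase. Since $V(t)=e^{-t(\pt_{x_1}^3+\pt_{x_2}^3)}$, its Schwartz kernel factorizes as a product of two one-dimensional Airy-type propagators: writing $V(t)f = K_t * f$ with
\begin{eqnarray*}
K_t(x_1,x_2) = \frac{1}{(2\pi)^2}\int_{\rre^2} e^{i(x_1\xi_1+x_2\xi_2)+it(\xi_1^3+\xi_2^3)}\,d\xi_1 d\xi_2
= A_t(x_1)\,A_t(x_2),
\end{eqnarray*}
where $A_t(x_j)=\frac{1}{2\pi}\int_\rre e^{ix_j\xi_j+it\xi_j^3}d\xi_j$ is the one-dimensional Airy kernel. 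The classical van der Corput / stationary phase estimate gives $\|A_t\|_{L^\infty}\lesssim t^{-1/3}$, so by Young's inequality $\|K_t\|_{L^\infty}\lesssim t^{-2/3}$, yielding the $L^1$--$L^\infty$ bound $\|V(t)f\|_{L^\infty}\lesssim t^{-2/3}\|f\|_{L^1}$. Interpolating this against the $L^2$ isometry $\|V(t)f\|_{L^2}=\|f\|_{L^2}$ produces (\ref{linear1}). For (\ref{linear2}) the derivative weights $|\pt_{x_1}|^{1/2-1/p}|\pt_{x_2}|^{1/2-1/p}$ are designed to cancel the degeneracy of the stationary points: the phase $\xi_j^3$ has a degenerate critical point only at $\xi_j=0$, and the factor $|\xi_j|^{1/2-1/p}$ vanishes there, upgrading the one-dimensional kernel bound from $t^{-1/3}$ to $t^{-1/2}$ (the non-degenerate rate). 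I would make this precise by a dyadic decomposition in each frequency variable, applying van der Corput with the second derivative $|\pt_{\xi_j}^2(\xi_j^3)|\sim|\xi_j|$ away from the origin and absorbing the weight; summing the dyadic pieces then gives $\||\pt_{x_1}|^{1/2}|\pt_{x_2}|^{1/2}V(t)f\|_{L^\infty}\lesssim t^{-1}\|f\|_{L^1}$, and interpolation with the $L^2$ bound yields (\ref{linear2}).

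For part (ii), I would run the standard duality/$TT^*$ argument of Keel--Tao. The admissibility condition $3/q_j+2/r_j=1$ with $2\le r_j\le\infty$ is exactly the scaling-adapted admissibility for the decay rate $\sigma=2/3$ in two space dimensions, since (\ref{linear1}) reads $\|V(t)f\|_{L^{r}}\lesssim t^{-\sigma(1-2/r)}\|f\|_{L^{r'}}$ with $\sigma=2/3$, and the abstract Keel--Tao theorem requires $1/q=\sigma(1/2-1/r)$, i.e. $q=3/(1-2/r)$, equivalently $3/q+2/r=1$. With the unitarity of $V(t)$ and the dispersive bound (\ref{linear1}) in hand, the hypotheses of the abstract framework are met (including the endpoint $(q,r)=(3,\infty)$ by the $r=\infty$ case), so the inhomogeneous (retarded) estimate (\ref{linear3}) follows directly. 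One technical point: the retarded integral in (\ref{linear3}) runs over $\tau'>\tau$ with $V(\tau-\tau')$, which is the backward-in-time Duhamel operator natural to the final-state formulation; this is handled by the same Christ--Kiselev lemma step that converts the untruncated $TT^*$ bound into the truncated one, valid whenever $q_2'<q_1$, and the diagonal/endpoint cases are covered by the full Keel--Tao result.

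The main obstacle I expect is the derivative-weighted estimate (\ref{linear2}): one must show that the weights $|\xi_j|^{1/2-1/p}$ precisely compensate the loss from the degenerate stationary point at the origin to recover the full non-degenerate decay rate $t^{-(1-2/p)}$, uniformly across the dyadic frequency scales. Care is needed because the two frequency variables decouple but each contributes its own degeneracy, so the dyadic sums in $\xi_1$ and $\xi_2$ must be controlled simultaneously; this is where the exact exponent $1/2-1/p$ (rather than any smaller power) is essential, and getting the endpoint behavior as $p\to\infty$ and $p\to2$ right is the delicate part.
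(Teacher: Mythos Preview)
Your proposal is correct and follows essentially the same route as the paper: factorize the kernel of $V(t)$ as a product of one-dimensional Airy kernels, use the known bounds $|A_t|\lesssim t^{-1/3}$ and $||\pt_y|^{1/2}A_t|\lesssim t^{-1/2}$ to obtain the $L^1\to L^\infty$ endpoints, interpolate with the $L^2$ unitarity for (\ref{linear1}) and (\ref{linear2}), and invoke the abstract Keel--Tao theorem applied to (\ref{linear1}) for the Strichartz estimate (\ref{linear3}). The paper simply cites \cite{LP} for the one-dimensional Airy bounds and \cite{KT} for the Strichartz step rather than carrying out the dyadic van der Corput and $TT^*$ arguments you sketch, so your plan is slightly more detailed but identical in substance; in particular your closing worry about the endpoint behavior of (\ref{linear2}) is unnecessary, since once the $p=\infty$ kernel bound is in hand the full range follows by complex interpolation against the $p=2$ unitarity.
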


\vskip2mm

\begin{proof}[Proof of Lemma \ref{lemL}.] 
(i) By the unitary property of $V(t)$, we have 
\begin{equation}
\|V(t)f\|_{L_{x}^2}=\|f\|_{L_{x}^{2}}. \label{l2}
\end{equation}
On the other hand, $V(t)f$ can be rewritten as 
\begin{eqnarray}
[V(t)f](x_1,x_2)
&=&
\frac{1}{2\pi}
\int\!\!\!\int_{\rre^2}
e^{ix_1\xi_1+ix_2\xi_2+it(\xi_1^3+\xi_2^3)}
\widehat{f}(\xi_1,\xi_2)d\xi_1 d\xi_2
\label{v}\\
&=&
\int\!\!\!\int_{\rre^2}
K(t,x_1-x_1',x_2-x_2')f(x_1',x_2')dx_1'dx_2',
\nonumber
\end{eqnarray}
where
\begin{eqnarray*}
K(t,x_1,x_2)=
\frac{1}{(2\pi)^2}\int\!\!\!\int_{\rre^2}
e^{ix_1\xi_1+ix_2\xi_2+it(\xi_1^3+\xi_2^3)}d\xi_1 d\xi_2.
\end{eqnarray*}
Note 
\begin{eqnarray*}
K(t,x_1,x_2)&=&
\frac{1}{(2\pi)^2}
\left(\int_{\rre}e^{ix_1\xi_1+it\xi_1^3}d\xi_1\right)
\left(\int_{\rre}e^{ix_2\xi_2+it\xi_2^3}d\xi_2\right)\\
&=&
\frac{1}{2\pi}A(t,x_1)A(t,x_2),
\end{eqnarray*}
where the function $A$ is given by 
\begin{eqnarray*}
A(t,y)=\frac{1}{\sqrt{2\pi}}\int_{\rre}e^{iy\xi+it\xi^3}d\xi
=t^{-\frac13}Ai(t^{-\frac13}y),\qquad t, y\in\rre,
\end{eqnarray*}
and $Ai$ is the Airy function :  
$Ai(z)=(2\pi)^{-1/2}\int_{\rre}e^{iz\xi+i\xi^3}d\xi$. 
By the well-known property of the Airy function $Ai\in L^{\infty}$
(see \cite[Section 1.5]{LP} for instance), we have
\begin{eqnarray*}
|A(t,y)|\lesssim t^{-\frac13}.
\end{eqnarray*}
Hence 
\begin{eqnarray*}
|K(t,x_1,x_2)|\lesssim t^{-\frac23}.
\end{eqnarray*}
Substituting this into (\ref{v}), we have
\begin{eqnarray}
\|V(t)f\|_{L_{x}^{\infty}}
&\lesssim&t^{-\frac23}
\|f\|_{L_{x}^{1}}.\label{l1}
\end{eqnarray}
By the interpolation between (\ref{l2}) and (\ref{l1}), 
we have (\ref{linear1}). 

Furthermore, by the well-known property of the Airy function 
(see \cite[Section 1.5]{LP} for instance)
\begin{eqnarray*}
||\pt_y|^{\frac12}A(t,y)|\lesssim t^{-\frac12},
\end{eqnarray*}
we have
\begin{eqnarray*}
||\pt_{x_1}|^{\frac12}
|\pt_{x_2}|^{\frac12}K(t,x_1,x_2)|\lesssim t^{-1}.
\end{eqnarray*}
Substituting this into (\ref{v}), we have
\begin{eqnarray}
\||\pt_{x_1}|^{\frac12}|\pt_{x_2}|^{\frac12}V(t)f\|_{L_{x}^{\infty}}
&\lesssim&t^{-1}
\|f\|_{L_{x}^1}.\label{l3}
\end{eqnarray}
By the interpolation between (\ref{l2}) and (\ref{l3}), 
we have (\ref{linear2}). 

\vskip2mm
\noindent
(ii) The inequality (\ref{linear3}) follows 
by applying \cite[Theorem 1.2]{KT}
to (\ref{linear1}).
\end{proof}

\vskip2mm

%
%
\section{Bilinear Dispersive Estimates}

In this section we derive $L^2$ estimate for $v_2$ 
defined by (\ref{v2}) which is key to prove 
Theorem \ref{main}. We show the following.

\begin{lem}\label{xF}
For any $t>0$, we have 
\begin{eqnarray}
\left\|(\pt_{x_1}+\pt_{x_2})\int_t^{\infty}V(t-\tau)
\left[(V(\tau)f)(V(\tau)g)\right]d\tau\right\|_{L_x^2}
&\lesssim& t^{-1}\|f\|_{Z}\|g\|_{Z},
\label{xFe}
\end{eqnarray}
where 
\begin{eqnarray*}
\|f\|_{Z}&=&
\|\langle x\rangle \pt_{x_1}^{-\frac12}\pt_{x_2}^{-\frac12}f\|_{L_x^1}
+\|\langle x\rangle \pt_{x_1}^{-1}f\|_{L_x^2}
+\|\langle x\rangle \pt_{x_2}^{-1}f\|_{L_x^2}\\
& &+\|\langle x\rangle \pt_{x_1}\pt_{x_2}^{-2}f\|_{L_x^2}
+\|\langle x\rangle \pt_{x_1}^{-2}\pt_{x_2}f\|_{L_x^2}
+\|\pt_{x_1}^{-2}f\|_{L_x^2}
+\|\pt_{x_2}^{-2}f\|_{L_x^2}.
\end{eqnarray*}
\end{lem}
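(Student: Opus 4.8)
The plan is to pass to the Fourier side, read off the phase of the resulting time--frequency integral, and then exploit the resonance function together with the null factor $\xi_1+\xi_2$ by integrating by parts in $\tau$ and in $\eta$. With $\widehat{V(\tau)f}(\eta)=e^{i\tau(\eta_1^3+\eta_2^3)}\widehat f(\eta)$ and $\zeta:=\xi-\eta$, the expression inside the norm in (\ref{xFe}) has Fourier transform, up to a constant,
\begin{equation*}
i(\xi_1+\xi_2)\,e^{it(\xi_1^3+\xi_2^3)}\int_t^{\infty}\!\!\int_{\rre^2}
e^{i\tau\omega(\xi,\eta)}\widehat f(\eta)\widehat g(\zeta)\,d\eta\,d\tau,
\end{equation*}
where the resonance function obeys the algebraic identity
\begin{equation*}
\omega(\xi,\eta):=-(\xi_1^3+\xi_2^3)+(\eta_1^3+\eta_2^3)+(\zeta_1^3+\zeta_2^3)
=-3\bigl(\xi_1\eta_1\zeta_1+\xi_2\eta_2\zeta_2\bigr).
\end{equation*}
I would record at once that $\pt_{\eta_j}\omega=3\xi_j(2\eta_j-\xi_j)$, so that the space resonances $\nabla_\eta\omega=0$ force, for each $j$, either $\xi_j=0$ or $\eta_j=\xi_j/2$; a direct check then shows that on the space--time resonant set $R_{\mathrm{st}}:=\{\omega=0\}\cap\{\nabla_\eta\omega=0\}$ one necessarily has $\xi_1+\xi_2=0$. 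This is the null structure: the multiplier $\xi_1+\xi_2$ coming from $(\pt_{x_1}+\pt_{x_2})$ vanishes exactly where the two resonances collide.

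Since $\xi_1+\xi_2$, $\widehat f$ and $\widehat g$ do not depend on $\tau$, I would integrate by parts once in time, using $\int_t^\infty e^{i\tau\omega}\,d\tau=-e^{it\omega}/(i\omega)$ (the endpoint at $\tau=\infty$ being removed by a regularization $e^{-\delta\tau}$, $\delta\downarrow0$, justified by the non-stationarity of the $\eta$-integral off $\{\omega=0\}$). No remainder integral is produced, and one arrives at the exact representation $v_2=B_m[V(t)f,V(t)g]$, where $B_m$ is the bilinear multiplier with the \emph{singular} symbol
\begin{equation*}
m(\xi,\eta)=\frac{\xi_1+\xi_2}{\omega(\xi,\eta)}
=\frac{\xi_1+\xi_2}{-3(\xi_1\eta_1\zeta_1+\xi_2\eta_2\zeta_2)},
\end{equation*}
and the surviving phase is precisely that of a product of two free evolutions, $\Psi:=(\eta_1^3+\eta_2^3)+(\zeta_1^3+\zeta_2^3)$, with $\nabla_\eta\Psi=\nabla_\eta\omega$. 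The remaining factor $t^{-1}$ must now be extracted from the spatial oscillation $e^{it\Psi}$.

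To organise this I would localise $\eta$ (for frozen $\xi$) by dyadic cut-offs in $|\nabla_\eta\omega|$ and in the distances to $\{\omega=0\}$ and $\{\nabla_\eta\omega=0\}$, splitting into a space--non-resonant region $\{|\nabla_\eta\omega|\gtrsim\lambda\}$ and a space--resonant region $\{|\nabla_\eta\omega|\ll\lambda\}$. On the first region I integrate by parts once in $\eta$ via $e^{it\Psi}=(it)^{-1}|\nabla_\eta\Psi|^{-2}\nabla_\eta\Psi\cdot\nabla_\eta e^{it\Psi}$; this produces the gain $t^{-1}$, and whenever $\nabla_\eta$ falls on $\widehat f$ or $\widehat g$ it creates the $x_j$-multiplication responsible for the weights $\LR{x}$ in the $Z$-norm, while when it falls on the symbol it produces $\nabla_\eta\omega/|\nabla_\eta\omega|^2$ times $\eta$-derivatives of $m$. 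On the second region integration by parts in $\eta$ is unavailable, but there $\omega$ stays bounded away from $0$ \emph{except} near $\{\xi_1+\xi_2=0\}$, where the null numerator of $m$ keeps $m$ bounded; hence $m$ (and the symbols from the first region) are genuine multipliers, smooth away from $R_{\mathrm{st}}$. After absorbing the anisotropic homogeneity of these degree $-2$ symbols into the negative-derivative weights $\pt_{x_1}^{-1},\pt_{x_2}^{-1},\pt_{x_1}^{-1/2}\pt_{x_2}^{-1/2},\pt_{x_1}\pt_{x_2}^{-2},\pt_{x_1}^{-2}\pt_{x_2},\pt_{x_1}^{-2},\pt_{x_2}^{-2}$ that make up $\|\cdot\|_Z$, each localized piece is a Coifman--Meyer multiplier, and I apply \cite{CoifmanMeyer} in the H\"older pairing $L^\infty_x\cdot L^2_x\to L^2_x$. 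The decay $t^{-1}$ then comes, on the space--resonant region, directly from the sharp estimate (\ref{linear2}) for the $L^\infty_x$ factor (this is exactly why the sharp $t^{-1}$, rather than $t^{-2/3}$, bound is needed), and on the space--non-resonant region from the $\eta$-integration by parts; matching the seven singular factors of $m$ and its derivatives to the seven terms of $\|\cdot\|_Z$ yields the bound $t^{-1}\|f\|_Z\|g\|_Z$.

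I expect the genuine difficulty to be the uniform-in-$\xi$ analysis of a neighbourhood of $R_{\mathrm{st}}$, where one may neither integrate by parts in $\eta$ nor divide by $\omega$: one must quantify that the vanishing of $\xi_1+\xi_2$ on $R_{\mathrm{st}}$ occurs at the right order relative to the simultaneous degeneration of $\omega$ and $\nabla_\eta\omega$, so that, after the dyadic decomposition, the localized symbols obey the Coifman--Meyer derivative bounds with constants that are summable over the dyadic scales. The anisotropy between the $x_1$- and $x_2$-directions, and the several components of $R_{\mathrm{st}}$ (the piece $\{\eta=\xi/2,\ \xi_1+\xi_2=0\}$ together with the coordinate degeneracies $\xi_j=0$), are what make this step delicate; the rest is oscillatory-integral bookkeeping closed by the Coifman--Meyer theorem.
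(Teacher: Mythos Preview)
Your proposal has a genuine gap at the very first step. You integrate by parts in time \emph{before} localising, which produces the singular symbol
\[
m(\xi,\eta)=\frac{\xi_1+\xi_2}{\omega(\xi,\eta)},\qquad \omega=-3(\xi_1\eta_1\zeta_1+\xi_2\eta_2\zeta_2).
\]
This $m$ is singular on the whole time-resonant set $\{\omega=0\}$, not merely on the space--time resonant set $R_{\mathrm{st}}$. The null factor $\xi_1+\xi_2$ vanishes only on $R_{\mathrm{st}}$; at a generic point of $\{\omega=0\}\setminus\{\nabla_\eta\omega=0\}$ (for instance $\xi=(1,0)$, $\eta=(1,1)$, where $\omega=0$, $\nabla_\eta\omega\ne 0$, $\xi_1+\xi_2=1$) the symbol $m$ genuinely blows up. Your subsequent integration by parts in $\eta$ on the space--non-resonant region cannot cure this: when $\nabla_\eta$ lands on $m$ it produces $-(\xi_1+\xi_2)\nabla_\eta\omega/\omega^2$, which is \emph{more} singular in $\omega$, and a dyadic decomposition in $|\omega|\sim 2^{-k}$ yields pieces whose Coifman--Meyer constants grow like $2^{k}$ (or $2^{2k}$ after the $\eta$-IBP), so the sum over $k$ diverges. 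In short, doing the time IBP globally first is the wrong order of operations.

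The paper avoids this entirely by never dividing by $\omega$ or by $|\nabla_\eta\omega|$. The key is the explicit algebraic identity
\[
(\xi_1+\xi_2)=\frac{1}{p(\xi,\eta)}\Bigl(A(\xi,\eta)\,\phi(\xi,\eta)+B_1(\xi,\eta)\,\pt_{\eta_1}\phi(\xi,\eta)+B_2(\xi,\eta)\,\pt_{\eta_2}\phi(\xi,\eta)\Bigr),
\]
with $p(\xi,\eta)=(\xi_1^2-\xi_1\xi_2+\xi_2^2)+(\xi_1-2\eta_1)^2+(\xi_2-2\eta_2)^2$ strictly positive and $A,B_1,B_2$ explicit polynomials in $(\xi-2\eta)$ times monomials $\eta_1^a\eta_2^b$. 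One inserts this identity \emph{before} any integration by parts: the piece carrying $A\phi/p$ uses $\phi\,e^{-i\tau\phi}=i\pt_\tau e^{-i\tau\phi}$ (so the $\tau$-integration produces only boundary terms with the bounded multiplier $A/p$, never $1/\phi$), and the pieces carrying $B_j\,\pt_{\eta_j}\phi/p$ use $\pt_{\eta_j}\phi\,e^{-i\tau\phi}=(i\tau)^{-1}\pt_{\eta_j}e^{-i\tau\phi}$ (so the $\eta$-IBP yields $\tau^{-1}$ times derivatives of $B_j/p$, never $1/|\nabla_\eta\phi|$). After peeling off the explicit monomials in $\eta$ as the negative-derivative weights of the $Z$-norm, each remaining multiplier such as $(\xi_1-2\eta_1)^2/p$ or $(\xi_1-2\eta_1)(\sqrt{2}\eta_2-\eta_1)/p$ is homogeneous of degree $0$ and, after the change of variables $(\zeta,\sigma)=(\xi-\eta,\eta)$, satisfies the H\"ormander--Mihlin condition outright; Coifman--Meyer then closes the estimate with a single application, and no dyadic decomposition near $R_{\mathrm{st}}$ is needed at all. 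The ``genuine difficulty'' you anticipate is precisely what this identity dissolves.
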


To show Lemma \ref{xF}, we employ the 
Coifman-Meyer theorem \cite{CoifmanMeyer} for the multilinear 
multiplier operators. The following is a bilinear version 
of the Coifman-Meyer theorem.

\begin{lem}[]\label{CM} 
Let $m(\zeta,\sigma)$ be a $C^{\infty}$ function on 
$(\rre^d\times\rre^d)\backslash\{(0,0)\}$, 
and  $T_m$ be a bilinear multiplier operator defined by
\begin{eqnarray*}
T_m[f,g](x)
=(2\pi)^{-2d}
\int\!\!\!\int_{\rre^d\times\rre^d}
e^{ix(\zeta+\sigma)}
m(\zeta,\sigma)\widehat{f}(\zeta)\widehat{g}(\sigma)
d\zeta d\sigma.
\end{eqnarray*}
Assume that $m$ satisfies the H\"{o}rmander-Mihlin condition: 
for any multi-indices $\beta\in{{\mathbb Z}}_{+}^{2d}$,
\begin{eqnarray}
|\pt_{\zeta,\sigma}^{\beta}m(\zeta,\sigma)|
\le C_{\beta}(|\zeta|^2+|\sigma|^2)^{-\frac{|\beta|}{2}}.
\label{MH}
\end{eqnarray}
Then for any $1\le p,q\le\infty$ and $1<r<1$ 
satisfying $1/r=1/p+1/q$, we have
\begin{eqnarray*}
\left\|T_m[f,g]\right\|_{L_x^r}
\lesssim\|f\|_{L_x^p}\|g\|_{L_x^q}.
\end{eqnarray*}
\end{lem}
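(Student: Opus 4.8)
The final statement to prove is the bilinear Coifman–Meyer multiplier theorem (Lemma \ref{CM}): under the Hörmander–Mihlin condition \eqref{MH}, the bilinear operator $T_m$ maps $L^p\times L^q\to L^r$ whenever $1/r=1/p+1/q$. The plan is to reduce the continuous symbol to a discrete sum of well-localized pieces via a Littlewood–Paley decomposition and then bound each piece by a paraproduct estimate, summing with the help of the decay encoded in \eqref{MH}.

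First I would introduce a Littlewood–Paley partition of unity in the frequency pair $(\zeta,\sigma)$. Choose $\phi\in C_c^\infty$ supported in an annulus with $\sum_{k\in\mathbb Z}\phi(2^{-k}\,\cdot\,)=1$ away from the origin, and decompose the combined frequency variable $(\zeta,\sigma)\in\rre^{2d}$ dyadically by setting $\Phi_k(\zeta,\sigma)=\phi\bigl(2^{-k}\sqrt{|\zeta|^2+|\sigma|^2}\bigr)$. Writing $m=\sum_k m_k$ with $m_k=m\,\Phi_k$, each $m_k$ is supported where $\sqrt{|\zeta|^2+|\sigma|^2}\sim 2^k$. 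The key quantitative gain comes from rescaling: the dilated symbol $\widetilde m_k(\zeta,\sigma):=m_k(2^k\zeta,2^k\sigma)$ is supported in a fixed annulus independent of $k$, and by \eqref{MH} together with the Leibniz rule applied to $m\cdot\Phi_k$ one checks that all derivatives of $\widetilde m_k$ are bounded by constants $C_\beta$ uniform in $k$. In particular the Fourier transforms $\check{\widetilde m_k}$ lie in $L^1(\rre^{2d})$ with a uniform bound, so each $\widetilde m_k$ is a bounded bilinear symbol with operator norm independent of $k$.

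Next I would exploit the annular localization to realize each piece as a paraproduct. Because $m_k$ is supported where $(\zeta,\sigma)$ has total size $\sim2^k$, I would further split the annulus according to the relative sizes of $|\zeta|$ and $|\sigma|$ into the three standard regions $|\zeta|\ll|\sigma|$, $|\sigma|\ll|\zeta|$, and $|\zeta|\sim|\sigma|$. In each region one input frequency dominates, so $T_{m_k}[f,g]$ is, up to the harmless smooth factor, a product of a Littlewood–Paley piece of $f$ with one of $g$ whose output frequency is localized at scale $\sim2^k$. Using the uniform multiplier bound from the previous step, Hölder's inequality gives $\|T_{m_k}[f,g]\|_{L^r}\lesssim \|P_{\sim k}f\|_{L^p}\|\widetilde P_{\lesssim k}g\|_{L^q}$ (and the symmetric variant), where $P$ denotes Littlewood–Paley projections. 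Summing over $k$ is then controlled by the square-function characterization of $L^p$ (Littlewood–Paley theorem, valid for $1<p<\infty$): the high–low structure makes the sum almost orthogonal in the output, and Cauchy–Schwarz in $k$ against the square functions of $f$ and $g$ yields $\|T_m[f,g]\|_{L^r}\lesssim\|f\|_{L^p}\|g\|_{L^q}$.

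The main obstacle is the summation over dyadic scales: the individual bounds on $T_{m_k}$ are uniform but do not decay in $k$, so a naive triangle inequality diverges. Overcoming this requires the almost-orthogonality built into the paraproduct structure — one must verify that the output frequencies of the three regions are suitably localized so that the Littlewood–Paley square function can be invoked, and handle the diagonal region $|\zeta|\sim|\sigma|$ where the output frequency is not localized (there one sums in the inputs instead and uses maximal/square-function estimates). A secondary technical point is the treatment of the endpoint exponents: the Littlewood–Paley square-function argument is restricted to $1<p,q,r<\infty$, so the cases involving $p$ or $q$ equal to $1$ or $\infty$, which the statement formally allows, must be recovered either by interpolation or by a direct $L^1\to L^1$ / $L^\infty\to L^\infty$ kernel estimate using the uniform $L^1$ bound on $\check{\widetilde m_k}$ established above.
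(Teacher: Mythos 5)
You cannot be compared against the paper's argument line by line, because the paper does not prove Lemma \ref{CM} at all: its ``proof'' is a one-line citation to Coifman--Meyer \cite{CoifmanMeyer} and to \cite[Theorem 7.5.3]{G}, the lemma being imported as a known black box. What you have sketched is, in essence, the standard proof from exactly that cited literature: dyadic decomposition of the symbol in the joint variable, uniform kernel bounds after rescaling (compact annular support plus the H\"ormander--Mihlin bounds \eqref{MH} give, via integration by parts, $|\check{\widetilde m}_k(x,y)|\lesssim(1+|x|+|y|)^{-N}$ uniformly in $k$, hence a uniform $L^1$ kernel bound), reduction to the three paraproduct regimes, and summation by almost orthogonality and Littlewood--Paley square functions. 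That outline is sound, and you correctly identify the real difficulty --- the dyadic sum does not converge by the triangle inequality --- together with the correct mechanism (frequency localization of the output in the high--low regimes, Cauchy--Schwarz in $k$ against square functions on the diagonal).

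One substantive caveat: your treatment of the endpoint exponents is the one place where the sketch points at tools that would not work. Interpolation cannot reach $p=\infty$ or $q=\infty$, and a per-piece $L^\infty\times L^2\to L^2$ bound via the uniform $L^1$ kernel estimate runs into exactly the same divergent sum over $k$; the standard route to these endpoints is bilinear Calder\'on--Zygmund theory, i.e.\ one passes to a transpose of $T_m$ (which is again a Coifman--Meyer operator), proves boundedness of the transpose in the reflexive range, and recovers the $L^\infty$ endpoint by duality. This is not a pedantic point here, since the paper applies the lemma precisely at such an endpoint, with $(p,q,r)=(\infty,2,2)$, in the proof of Lemma \ref{xF}. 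Finally, note that the statement as printed contains a typo (``$1<r<1$''); with the intended condition $1<r<\infty$, the cases $p=1$ or $q=1$ that you worry about cannot occur at all, since they would force $1/r=1/p+1/q\ge1$.
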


\vskip2mm
\begin{rem} By the Plancherel theorem and change of variables 
$\zeta=\xi-\eta$, $\sigma=\eta$, 
\begin{eqnarray*}
\lefteqn{\left\|
\int_{\rre^d}m(\xi,\eta)\widehat{f}(\xi-\eta)\widehat{g}(\eta)
d\eta
\right\|_{L_{\xi}^2}}\\
&=&
\left\|
{{\mathcal F}}^{-1}_{\xi\mapsto x}
\left[\int_{\rre^d}m(\xi,\eta)\widehat{f}(\xi-\eta)\widehat{g}(\eta)
d\eta\right]\right\|_{L_x^2}\\
&=&
\left\|\int\!\!\!\int_{\rre^d\times\rre^d}
e^{ix\xi}m(\xi,\eta)\widehat{f}(\xi-\eta)\widehat{g}(\eta)
d\xi d\eta\right\|_{L_x^2}\\
&=&
\left\|\int\!\!\!\int_{\rre^d\times\rre^d}
e^{ix(\zeta+\sigma)}m(\zeta+\sigma,\sigma)
\widehat{f}(\zeta)\widehat{g}(\sigma)
d\zeta d\sigma\right\|_{L_x^2}.
\end{eqnarray*}
Hence if $\widetilde{m}(\zeta,\sigma)
:=m(\zeta+\sigma,\sigma)$ 
satisfies the H\"{o}rmander-Mihlin condition (\ref{MH}), 
then we have
\begin{eqnarray}
\left\|
\int_{\rre^d}m(\xi,\eta)\widehat{f}(\xi-\eta)\widehat{g}(\eta)
d\eta
\right\|_{L_{\xi}^2}
\lesssim\|f\|_{L_x^p}\|g\|_{L_x^q}, 
\label{rem1}
\end{eqnarray}
where $1\le p,q\le\infty$ and $1/p+1/q=1/2$. 
In the proof of Lemma \ref{xF}, we frequently use 
the inequality (\ref{rem1}).
\end{rem}

\vskip2mm

\begin{proof}[Proof of Lemma \ref{CM}.] 
See \cite{CoifmanMeyer}, \cite[Theorem 7.5.3]{G} for instance.
\end{proof}

\begin{proof}[Proof of Lemma \ref{xF}.] 
Simple calculation yields
\begin{eqnarray} 
\lefteqn{(\pt_{x_1}+\pt_{x_2})\int_t^{\infty}V(t-\tau)
\left[(V(\tau)f)(V(\tau)g)\right]d\tau}\label{o1}\\
&=&
{{\mathcal F}}_{\xi\mapsto x}^{-1}
\left[(\xi_1+\xi_2)e^{it(\xi_1^3+\xi_2^3)}
\int_t^{\infty}\!\!\!\int_{\rre^2}e^{-i\tau\phi(\xi,\eta)}
\widehat{f}(\xi-\eta)\widehat{g}(\eta)
d\eta d\tau\right](x),\nonumber
\end{eqnarray} 
where the resonant function $\phi$ is given by 
\begin{eqnarray} 
\phi(\xi,\eta)
&=&(\xi_1^3+\xi_2^3)-\left\{(\xi_1-\eta_1)^3+(\xi_2-\eta_2)^3\right\}
-(\eta_1^3+\eta_2^3)\label{r1}\\
&=&3(\xi_1^2\eta_1-\xi_1\eta_1^2)+3(\xi_2^2\eta_2-\xi_2\eta_2^2)
\nonumber\\
&=&3\xi_1(\xi_1-\eta_1)\eta_1+3\xi_2(\xi_2-\eta_2)\eta_2.
\nonumber
\end{eqnarray} 
To show (\ref{xFe}), we estimate 
\begin{eqnarray} 
I(f,g):=(\xi_1+\xi_2)
\int_t^{\infty}\!\!\!\int_{\rre^2}e^{-i\tau\phi(\xi,\eta)}
\widehat{f}(\xi-\eta)\widehat{g}(\eta)
d\eta d\tau\label{Ifg}
\end{eqnarray} 
by using the space-time resonance method.

By (\ref{r1}), we see 
\begin{eqnarray} 
\pt_{\eta_1}\phi(\xi,\eta)&=&3\xi_1^2-6\xi_1\eta_1=3\xi_1(\xi_1-2\eta_1),
\label{r2}\\
\pt_{\eta_2}\phi(\xi,\eta)&=&3\xi_2^2-6\xi_2\eta_2=3\xi_2(\xi_2-2\eta_2).
\label{r3}
\end{eqnarray} 
Especially, we have
\begin{eqnarray} 
\phi(\xi,\eta)-\eta\cdot\nabla_{\eta}\phi(\xi,\eta)
=3\xi_1\eta_1^2+3\xi_2\eta_2^2.
\label{r4}
\end{eqnarray} 
%
By (\ref{r1}), (\ref{r2}) and (\ref{r3}), we have 
\begin{eqnarray}
\lefteqn{(\xi_1^2-\xi_1\xi_2+\xi_2^2)(\xi_1+\xi_2)}\qquad\qquad\label{k1}\\
&=&\xi_1^3+\xi_2^3\nonumber\\
&=&\frac43\phi(\xi,\eta)+\frac13(\xi_1-2\eta_1)\pt_{\eta_1}\phi(\xi,\eta)
+\frac13(\xi_2-2\eta_2)\pt_{\eta_2}\phi(\xi,\eta).
\nonumber
\end{eqnarray}
Furthermore, by (\ref{r4}), we see 
\begin{eqnarray*}
\xi_2=-\xi_1\frac{\eta_1^2}{\eta_2^2}
+\frac{\phi(\xi,\eta)-\eta\cdot\nabla_{\eta}\phi(\xi,\eta)}{3\eta_2^2}.
\end{eqnarray*}
Hence
\begin{eqnarray}
\xi_1+\xi_2=\xi_1\frac{\eta_2^2-\eta_1^2}{\eta_2^2}
+\frac{\phi(\xi,\eta)-\eta\cdot\nabla_{\eta}\phi(\xi,\eta)}{3\eta_2^2}.
\label{k2}
\end{eqnarray}
Multiplying (\ref{k2}) by $(\xi_1-2\eta_1)^2$ and using (\ref{r2}), we have
\begin{eqnarray}
\lefteqn{(\xi_1-2\eta_1)^2(\xi_1+\xi_2)}\label{k3}\\
&=&
\frac13(\xi_1-2\eta_1)\frac{\eta_2^2-\eta_1^2}{\eta_2^2}
\pt_{\eta_1}\phi(\xi,\eta)
+\frac13(\xi_1-2\eta_1)^2\frac{1}{\eta_2^2}
\left(\phi(\xi,\eta)-\eta\cdot\nabla_{\eta}\phi(\xi,\eta)\right).
\nonumber
\end{eqnarray}
In a similar way, we obtain
\begin{eqnarray}
\lefteqn{(\xi_2-2\eta_2)^2(\xi_1+\xi_2)}\label{k4}\\
&=&
\frac13(\xi_2-2\eta_2)\frac{\eta_1^2-\eta_2^2}{\eta_1^2}
\pt_{\eta_2}\phi(\xi,\eta)
+\frac13(\xi_2-2\eta_2)^2\frac{1}{\eta_1^2}
\left(\phi(\xi,\eta)-\eta\cdot\nabla_{\eta}\phi(\xi,\eta)\right).
\nonumber
\end{eqnarray}
(\ref{k1})$+$(\ref{k3})$+$(\ref{k4}) yields
\begin{eqnarray}
\lefteqn{\left\{(\xi_1^2-\xi_1\xi_2+\xi_2^2)
+(\xi_1-2\eta_1)^2+(\xi_2-2\eta_2)^2\right\}
(\xi_1+\xi_2)}\label{k}\\
&=&A(\xi,\eta)\phi(\xi,\eta)
+B_1(\xi,\eta)\pt_{\eta_1}\phi(\xi,\eta)
+B_2(\xi,\eta)\pt_{\eta_2}\phi(\xi,\eta),
\nonumber
\end{eqnarray}
where 
\begin{eqnarray}
A(\xi,\eta)&=&\frac43
+\frac13(\xi_1-2\eta_1)^2\frac{1}{\eta_2^2}
+\frac13(\xi_2-2\eta_2)^2\frac{1}{\eta_1^2}
\label{termA}\\
&=:&\sum_{k=1}^3A_{k}(\xi,\eta),
\nonumber\\
B_1(\xi,\eta)&=&
\frac13(\xi_1-2\eta_1)\frac{2\eta_2^2-\eta_1^2}{\eta_2^2}
-\frac13(\xi_1-2\eta_1)^2\frac{\eta_1}{\eta_2^2}
-\frac13(\xi_2-2\eta_2)^2\frac{1}{\eta_1}
\label{termB1}\\
&=:&\sum_{k=1}^3B_{1,k}(\xi,\eta),
\nonumber\\
B_2(\xi,\eta)&=&
\frac13(\xi_2-2\eta_2)\frac{2\eta_1^2-\eta_2^2}{\eta_1^2}
-\frac13(\xi_1-2\eta_1)^2\frac{1}{\eta_2}
-\frac13(\xi_2-2\eta_2)^2\frac{\eta_2}{\eta_1^2}
\label{termB2}\\
&=:&\sum_{k=1}^3B_{2,k}(\xi,\eta).
\nonumber
\end{eqnarray}
Hence we have the key identity 
\begin{eqnarray}
\xi_1+\xi_2
=\frac{1}{p(\xi,\eta)}
\left(A(\xi,\eta)\phi(\xi,\eta)
+B_1(\xi,\eta)\pt_{\eta_1}\phi(\xi,\eta)
+B_2(\xi,\eta)\pt_{\eta_2}\phi(\xi,\eta)
\right),
\label{key1}
\end{eqnarray}
where
\begin{eqnarray*}
p(\xi,\eta)=(\xi_1^2-\xi_1\xi_2+\xi_2^2)
+(\xi_1-2\eta_1)^2+(\xi_2-2\eta_2)^2.
\end{eqnarray*}

By using (\ref{key1}), 
we split $I(f,g)$ (defined by (\ref{Ifg})) to 
the time resonant and space resonant terms:
\begin{eqnarray}
\lefteqn{I(f,g)}\label{ST}\\
&=&
\int_t^{\infty}\!\!\!\int_{\rre^2}
\phi(\xi,\eta)e^{-i\tau\phi(\xi,\eta)}\frac{1}{p(\xi,\eta)}
A(\xi,\eta)\widehat{f}(\xi-\eta)
\widehat{g}(\eta)
d\eta d\tau\nonumber\\
& &+\sum_{j=1}^2
\int_t^{\infty}\!\!\!\int_{\rre^2}
\pt_{\eta_j}\phi(\xi,\eta)e^{-i\tau\phi(\xi,\eta)}\frac{1}{p(\xi,\eta)}
B_j(\xi,\eta)\widehat{f}(\xi-\eta)
\widehat{g}(\eta)
d\eta d\tau\nonumber\\
&=&
i\int_t^{\infty}\!\!\!\int_{\rre^2}
\pt_{\tau}\{e^{-i\tau\phi(\xi,\eta)}\}\frac{1}{p(\xi,\eta)}
A(\xi,\eta)\widehat{f}(\xi-\eta)
\widehat{g}(\eta)
d\eta d\tau\nonumber\\
& &+i\sum_{j=1}^2
\int_t^{\infty}\!\!\!\int_{\rre^2}\tau^{-1}
\pt_{\eta_j}\{e^{-i\tau\phi(\xi,\eta)}\}\frac{1}{p(\xi,\eta)}
B_j(\xi,\eta)\widehat{f}(\xi-\eta)
\widehat{g}(\eta)
d\eta d\tau\nonumber\\
&=:&I_{\text{tr}}(f,g)+I_{\text{sr}}(f,g).
\nonumber
\end{eqnarray}

We first evaluate the time resonant term $I_{\text{tr}}(f,g)$. To this end, 
we split it into the following several pieces.
\begin{eqnarray*}
I_{\text{tr}}(f,g)
&=&\sum_{k=1}^3i\int_t^{\infty}\!\!\!\int_{\rre^2}
\pt_{\tau}\{e^{-i\tau\phi(\xi,\eta)}\}\frac{1}{p(\xi,\eta)}
A_k(\xi,\eta)\widehat{f}(\xi-\eta)
\widehat{g}(\eta)
d\eta d\tau\\
&=:&\sum_{k=1}^3I_{\text{tr},k}(f,g),
\end{eqnarray*}
where $A_k$ are defined by (\ref{termA}). 
We treat the term $I_{\text{tr},2}(f,g)$ only 
since the other terms can be treated in a similar way.

Integrating in $\tau$ for $I_{\text{tr},2}(f,g)$, 
we have
\begin{eqnarray*}
\lefteqn{I_{\text{tr},2}(f,g)}\\
&=&
\frac{1}{3}i\lim_{T\to\infty}
\int_t^T\!\!\!\int_{\rre^2}
\pt_{\tau}\{e^{-i\tau\phi(\xi,\eta)}\}\frac{(\xi_1-2\eta_1)^2}{p(\xi,\eta)}
\widehat{f}(\xi-\eta)
\frac{1}{\eta_2^2}\widehat{g}(\eta)
d\eta d\tau\\
&=&
\frac{1}{3}i\lim_{T\to\infty}
\int_{\rre^2}
e^{-iT\phi(\xi,\eta)}\frac{(\xi_1-2\eta_1)^2}{p(\xi,\eta)}
\widehat{f}(\xi-\eta)
\frac{1}{\eta_2^2}\widehat{g}(\eta)
d\eta\\
& &
-\frac{1}{3}i\int_{\rre^2}
e^{-it\phi(\xi,\eta)}\frac{(\xi_1-2\eta_1)^2}{p(\xi,\eta)}
\widehat{f}(\xi-\eta)
\frac{1}{\eta_2^2}\widehat{g}(\eta)
d\eta.
\end{eqnarray*}
Let
\begin{eqnarray*}
m_{\text{tr},2}(\xi,\eta)&:=&\frac{(\xi_1-2\eta_1)^2}{p(\xi,\eta)}\\
&=&\frac{(\xi_1-2\eta_1)^2}{
(\xi_1^2-\xi_1\xi_2+\xi_2^2)
+(\xi_1-2\eta_1)^2+(\xi_2-2\eta_2)^2}
\end{eqnarray*}
and let
\begin{eqnarray*}
\widetilde{m}_{\text{tr},2}(\zeta,\sigma)
&:=&m_{\text{tr},2}(\zeta+\sigma,\sigma)\\
&=&\frac{(\zeta_1-\sigma_1)^2
}{(\zeta_1+\sigma_1)^2-(\zeta_1+\sigma_1)(\zeta_2+\sigma_2)
+(\zeta_2+\sigma_2)^2
+(\zeta_1-\sigma_1)^2
+(\zeta_2-\sigma_2)^2},
\end{eqnarray*}
Then $\widetilde{m}_{\text{tr},2}$ 
satisfies the H\"{o}rmander-Mihlin condition (\ref{MH}). 
Indeed, 
since 
\begin{eqnarray*}
(\zeta_1+\sigma_1)^2-(\zeta_1+\sigma_1)(\zeta_2+\sigma_2)
+(\zeta_2+\sigma_2)^2
+(\zeta_1-\sigma_1)^2
+(\zeta_2-\sigma_2)^2\ge|\zeta|^2+|\sigma|^2,
\end{eqnarray*}
we see that for any $\beta\in{{\mathbb Z}}_+^4$, 
\begin{eqnarray*}
\left|\pt_{\zeta,\sigma}^{\beta}
\widetilde{m}_{\text{tr},2}(\zeta,\sigma)\right|
\lesssim(|\zeta|^2+|\sigma|^2)^{-\frac{|\beta|}{2}}.
\end{eqnarray*}
Hence, Lemma \ref{CM} yields 
\begin{eqnarray}
\lefteqn{\left\|
I_{\text{tr},2}(f,g)
\right\|_{L_{\xi}^2}}
\nonumber\\
&\lesssim&
\lim_{T\to\infty}
\left\|
\int_{\rre^2}
m_{\text{tr},2}(\xi,\eta)
\widehat{[V(T)f]}(\xi-\eta)
\widehat{[V(T)\pt_{x_2}^{-2}g]}(\eta)
d\eta
\right\|_{L_{\xi}^2}
\nonumber\\
& &
+\left\|
\int_{\rre^2}
m_{\text{tr},2}(\xi,\eta)
\widehat{[V(t)f]}(\xi-\eta)
\widehat{[V(t)\pt_{x_2}^{-2}g]}(\eta)
d\eta
\right\|_{L_{\xi}^2}
\nonumber\\
&\lesssim&
\lim_{T\to\infty}
\|V(T)f\|_{L_{x}^{\infty}}
\|V(T)\pt_{x_2}^{-2}g\|_{L_{x}^2}
\nonumber\\
& &
+\|V(t)f\|_{L_{x}^{\infty}}
\|V(t)\pt_{x_2}^{-2}g\|_{L_{x}^2}.
\nonumber
\end{eqnarray}
By the decay estimate (Lemma \ref{lemL} (\ref{linear2})), 
we have
\begin{eqnarray*}
\left\|I_{\text{tr},2}(f,g)
\right\|_{L_{\xi}^2}
\lesssim
t^{-1}\|f\|_{Z}\|g\|_{Z}.
\nonumber
\end{eqnarray*}
In a similar way we have
\begin{eqnarray}
\left\|I_{\text{tr}}(f,g)\right\|_{L_{\xi}^2}
\lesssim t^{-1}\|f\|_{Z}\|g\|_{Z}.\label{LL}
\end{eqnarray}
Next we evaluate the space resonant term 
$I_{\text{sr}}(f,g)$ where $I_{\text{sr}}(f,g)$ is given by (\ref{ST}). 
To this end, we split it into the following pieces.
\begin{eqnarray*}
I_{\text{sr}}(f,g)
&=&\sum_{j=1}^2\sum_{k=1}^3i\int_t^{\infty}\!\!\!\int_{\rre^2}
\tau^{-1}\pt_{\eta_j}\{e^{-i\tau\phi(\xi,\eta)}\}\frac{1}{p(\xi,\eta)}
B_{j,k}(\xi,\eta)\widehat{f}(\xi-\eta)
\widehat{g}(\eta)
d\eta d\tau\\
&=:&\sum_{j=1}^2\sum_{k=1}^3I_{\text{sr},j,k}(f,g),
\end{eqnarray*}
where $B_{j,k}$ are given in (\ref{termB1}) and (\ref{termB2}).  
We treat the term $I_{\text{sr},1,1}(f,g)$ only 
since the other terms can be treated in a similar way.

By integration by parts in $\eta_1$ 
for $I_{\text{sr},1,1}(f,g)$, 
we have
\begin{eqnarray*}
\lefteqn{I_{\text{sr},1,1}(f,g)}\\
&=&
\frac13i
\int_t^{\infty}\!\!\!\int_{\rre^2}\tau^{-1}
\pt_{\eta_1}\{e^{-i\tau\phi(\xi,\eta)}\}
\frac{(\xi_1-2\eta_1)(\sqrt{2}\eta_2-\eta_1)
}{p(\xi,\eta)}\widehat{f}(\xi-\eta)
\frac{(\sqrt{2}\eta_2+\eta_1)}{\eta_2^2}\widehat{g}(\eta)
d\eta d\tau\\
&=&
-\frac13i
\int_t^{\infty}\!\!\!\int_{\rre^2}\tau^{-1}
e^{-i\tau\phi(\xi,\eta)}\pt_{\eta_1}
\left\{\frac{(\xi_1-2\eta_1)(\sqrt{2}\eta_2-\eta_1)}{p(\xi,\eta)}\right\}
\widehat{f}(\xi-\eta)
\frac{(\sqrt{2}\eta_2+\eta_1)}{\eta_2^2}\widehat{g}(\eta)
d\eta d\tau\\
& &
+\frac13i
\int_t^{\infty}\!\!\!\int_{\rre^2}\tau^{-1}
e^{-i\tau\phi(\xi,\eta)}
\frac{(\xi_1-2\eta_1)(\sqrt{2}\eta_2-\eta_1)}{p(\xi,\eta)}
\pt_{\eta_1}\widehat{f}(\xi-\eta)
\frac{(\sqrt{2}\eta_2+\eta_1)}{\eta_2^2}\widehat{g}(\eta)
d\eta d\tau\\
& &
-\frac13i
\int_t^{\infty}\!\!\!\int_{\rre^2}\tau^{-1}
e^{-i\tau\phi(\xi,\eta)}
\frac{(\xi_1-2\eta_1)(\sqrt{2}\eta_2-\eta_1)}{p(\xi,\eta)}
\widehat{f}(\xi-\eta)
\pt_{\eta_1}\left\{\frac{(\sqrt{2}\eta_2+\eta_1)}{\eta_2^2}
\widehat{g}(\eta)\right\}
d\eta d\tau.
\end{eqnarray*}
Let 
\begin{eqnarray*}
m_{\text{sr},1,1}(\xi,\eta)&:=&\frac{(\xi_1-2\eta_1)(\sqrt{2}\eta_2-\eta_1)}{p(\xi,\eta)}\\
&=&\frac{(\xi_1-2\eta_1)(\sqrt{2}\eta_2-\eta_1)
}{(\xi_1^2-\xi_1\xi_2+\xi_2^2)
+(\xi_1-2\eta_1)^2+(\xi_2-2\eta_2)^2}.
\end{eqnarray*}
Then, as in $\widetilde{m}_{\text{tr},2}$, 
we see that $\widetilde{m}_{\text{sr},1,1}(\zeta,\sigma)
=m_{\text{sr},1,1}(\zeta+\sigma,\sigma)$ 
and $\pt_{\sigma_1}\widetilde{m}_{\text{sr},1,1}$ 
satisfy the H\"{o}rmander-Mihlin condition (\ref{MH}). 
Hence Lemma \ref{CM} yields 
\begin{eqnarray}
\lefteqn{\left\|
I_{\text{sr},1,1}(f,g)
\right\|_{L_{\xi}^2}}
\nonumber\\
&\lesssim&
\int_t^{\infty}
\tau^{-1}\left\|
\int_{\rre^2}\pt_{\eta_1}m_{\text{sr},1,1}(\xi,\eta)
\widehat{[V(t)f]}(\xi-\eta)
\widehat{[V(t)(\sqrt{2}\pt_{x_2}^{-1}+\pt_{x_1}\pt_{x_2}^{-2})g]}(\eta)
d\eta 
\right\|_{L_{\xi}^2}d\tau
\nonumber\\
& &+\int_t^{\infty}
\tau^{-1}\left\|
\int_{\rre^2}m_{\text{sr},1,1}(\xi,\eta)
\widehat{[V(t)x_1f]}(\xi-\eta)
\widehat{[V(t)(\sqrt{2}\pt_{x_2}^{-1}+\pt_{x_1}\pt_{x_2}^{-2})g]}(\eta)
d\eta 
\right\|_{L_{\xi}^2}d\tau
\nonumber\\
& &+\int_t^{\infty}
\tau^{-1}\left\|
\int_{\rre^2}m_{\text{sr},1,1}(\xi,\eta)
\widehat{[V(t)f]}(\xi-\eta)
\widehat{[V(t)x_1(\sqrt{2}\pt_{x_2}^{-1}+\pt_{x_1}\pt_{x_2}^{-2})g}](\eta)
d\eta 
\right\|_{L_{\xi}^2}d\tau
\nonumber\\
&\lesssim&
\int_t^{\infty}
\tau^{-1}
\|V(\tau)f\|_{L_{x}^{\infty}}
(\|V(\tau)\pt_{x_2}^{-1}g\|_{L_{x}^2}
+\|V(\tau)\pt_{x_1}\pt_{x_2}^{-2}g\|_{L_{x}^2})d\tau
\nonumber\\
& &
+
\int_t^{\infty}
\tau^{-1}
\|V(\tau)x_1f\|_{L_{x}^{\infty}}
(\|V(\tau)\pt_{x_2}^{-1}g\|_{L_{x}^2}
+\|V(\tau)\pt_{x_1}\pt_{x_2}^{-2}g\|_{L_{x}^2})d\tau
\nonumber\\
& &
+\int_t^{\infty}
\tau^{-1}
\|V(\tau)f\|_{L_{x}^{\infty}}
(\|V(\tau)x_1\pt_{x_2}^{-1}g\|_{L_{x}^2}
+\|V(\tau)x_1\pt_{x_1}\pt_{x_2}^{-2}g\|_{L_{x}^2})d\tau.
\nonumber
\end{eqnarray}
By the decay estimate (Lemma \ref{lemL} (\ref{linear2})), we have
\begin{eqnarray*}
\left\|I_{\text{sr},1,1}(f,g)
\right\|_{L_{\xi}^2}
&\lesssim&
\|f\|_{Z}\|g\|_{Z}\int_t^{\infty}\tau^{-2}d\tau\label{L}\\
&\lesssim&t^{-1}\|f\|_{Z}\|g\|_{Z}.
\nonumber
\end{eqnarray*}
In a similar way, we have
\begin{eqnarray}
\left\|I_{\text{sr}}(f,g)\right\|_{L_{\xi}^2}
\lesssim t^{-1}\|f\|_{Z}\|g\|_{Z}.\label{L}
\end{eqnarray}
Combining (\ref{o1}), (\ref{ST}), (\ref{LL}) and (\ref{L}), 
we have 
\begin{eqnarray*}
\lefteqn{\left\|(\pt_{x_1}+\pt_{x_2})\int_t^{\infty}V(t-\tau)
\left[(V(\tau)f)(V(\tau)g)\right]d\tau\right\|_{L_x^2}}
\qquad\qquad\qquad\\
&=&
\left\|I(f,g)
\right\|_{L_x^2}\\
&\lesssim&
\left\|I_{\text{tr}}(f,g)\right\|_{L_{\xi}^2}
+\left\|I_{\text{sr}}(f,g)\right\|_{L_{\xi}^2}\\
&\lesssim& t^{-1}\|f\|_{Z}\|g\|_{Z}.
\end{eqnarray*}
Hence we obtain (\ref{xFe}). 
This completes the proof of Lemma \ref{xF}.
\end{proof}

\vskip2mm

\begin{lem}\label{cor:v2}
Let $v_1$ and $v_2$ be given by (\ref{v1}) and (\ref{v2}). 
Then for any $t>0$, we have
\begin{eqnarray}
\|v_1(t)\|_{W_x^{3,\infty}}&\lesssim&t^{-1}\|v_{\infty}\|_{X},
\label{v11}\\
\|v_2(t)\|_{H_x^3}&\lesssim&t^{-1}\|v_{\infty}\|_{X}^2,
\label{v22}
\end{eqnarray}
where the norm $\|\cdot\|_{X}$ is given by (\ref{normX}). 
\end{lem}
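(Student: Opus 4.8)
The plan is to reduce each estimate to a tool already established: the pointwise decay bound (\ref{linear2}) for (\ref{v11}), and the bilinear estimate of Lemma~\ref{xF} for (\ref{v22}). The common mechanism is that every operator in play --- $V(t)$, $\pt_{x_1}+\pt_{x_2}$, the smoothing operators $\pt_{x_j}^{-s}$, and $\langle\nabla\rangle^3$ --- is a Fourier multiplier in $(\pt_{x_1},\pt_{x_2})$, so all of them commute with one another. The only object that fails to commute is the weight $\langle x\rangle$, and since $\langle x\rangle\ge1$ the weighted norms in $\|\cdot\|_X$ dominate their unweighted analogues (multiplication by $\langle x\rangle^{-1}$ being bounded on $W_x^{3,1}$ and on $H_x^3$).

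For (\ref{v11}) I would commute $\langle\nabla\rangle^3$ through $V(t)$ and extract the half-derivatives appearing in (\ref{linear2}):
\[
\|v_1(t)\|_{W_x^{3,\infty}}=\big\||\pt_{x_1}|^{\frac12}|\pt_{x_2}|^{\frac12}V(t)\,\pt_{x_1}^{-\frac12}\pt_{x_2}^{-\frac12}\langle\nabla\rangle^3v_{\infty}\big\|_{L_x^{\infty}}.
\]
Applying the decay estimate (\ref{linear2}) with $p=\infty$ then supplies the factor $t^{-1}$ and leaves
\[
\|v_1(t)\|_{W_x^{3,\infty}}\lesssim t^{-1}\big\|\pt_{x_1}^{-\frac12}\pt_{x_2}^{-\frac12}\langle\nabla\rangle^3v_{\infty}\big\|_{L_x^{1}}=t^{-1}\big\|\pt_{x_1}^{-\frac12}\pt_{x_2}^{-\frac12}v_{\infty}\big\|_{W_x^{3,1}}\lesssim t^{-1}\|v_{\infty}\|_X,
\]
where the final step drops the harmless weight $\langle x\rangle\ge1$ and recognizes the first term of the definition (\ref{normX}) of $\|\cdot\|_X$.

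For (\ref{v22}) I would use that, $s=3$ being an integer, $\|v_2(t)\|_{H_x^3}$ is comparable to $\sum_{|\alpha|\le3}\|\pt^\alpha v_2\|_{L_x^2}$, which avoids distributing the nonlocal operator $\langle\nabla\rangle^3$ over a product. Since $V(\tau)$ commutes with $\pt^\beta$, the ordinary Leibniz rule gives
\[
\pt^\alpha v_2=-\sum_{\beta\le\alpha}\binom{\alpha}{\beta}(\pt_{x_1}+\pt_{x_2})\int_t^{\infty}V(t-\tau)\big[(V(\tau)\pt^\beta v_{\infty})(V(\tau)\pt^{\alpha-\beta}v_{\infty})\big]\,d\tau,
\]
so every summand is exactly of the form controlled by Lemma~\ref{xF} with $f=\pt^\beta v_{\infty}$ and $g=\pt^{\alpha-\beta}v_{\infty}$. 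Invoking (\ref{xFe}) yields $\|\pt^\alpha v_2\|_{L_x^2}\lesssim t^{-1}\sum_{\beta\le\alpha}\|\pt^\beta v_{\infty}\|_Z\,\|\pt^{\alpha-\beta}v_{\infty}\|_Z$, and it remains to establish $\|\pt^\beta v_{\infty}\|_Z\lesssim\|v_{\infty}\|_X$ for every $|\beta|\le3$. For this I would commute $\pt^\beta$ past the negative-order multipliers in each term of $\|\cdot\|_Z$ (which is free), and then past the weight: for $|\beta|\le3$ the Leibniz rule together with the bounds $|\langle x\rangle\,\pt^\gamma\langle x\rangle^{-1}|\lesssim1$ gives $\|\langle x\rangle\,\pt^\beta h\|_{L_x^2}\lesssim\|\langle x\rangle h\|_{H_x^3}$, and likewise for the $L^1$-based term. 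Matching each resulting expression term by term with (\ref{normX}) gives $\|\pt^\beta v_{\infty}\|_Z\lesssim\|v_{\infty}\|_X$, and summing over $|\alpha|\le3$ and $\beta\le\alpha$ produces (\ref{v22}).

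The main obstacle is precisely the reduction $\|\pt^\beta v_{\infty}\|_Z\lesssim\|v_{\infty}\|_X$. The derivatives $\pt^\beta$ commute with the smoothing factors $\pt_{x_j}^{-1}$, $\pt_{x_j}^{-2}$, $\pt_{x_1}^{-\frac12}\pt_{x_2}^{-\frac12}$, but not with the weight $\langle x\rangle$, so the commutator terms must be tracked for each of the seven constituents of the $X$- and $Z$-norms; this bookkeeping, while routine for the six $L^2$-based ($H_x^3$) terms, is delicate for the first, $L^1$-based ($W_x^{3,1}$) term, where one cannot expand $\langle\nabla\rangle^3$ over a product and should instead invoke the boundedness of multiplication by $\langle x\rangle^{-1}$ on $W_x^{3,1}$ directly. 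Once the commuting structure of the multipliers is in place, everything else follows from the decay estimate (\ref{linear2}) and Lemma~\ref{xF}.
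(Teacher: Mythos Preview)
Your proposal is correct and follows essentially the same approach as the paper: for (\ref{v11}) both you and the paper apply the decay estimate (\ref{linear2}) with $p=\infty$ after commuting $\langle\nabla\rangle^3$ through $V(t)$, and for (\ref{v22}) both arguments distribute the derivatives of the $H_x^3$-norm across the bilinear form using the Leibniz rule (the paper writes this out explicitly for $\nabla$, $\Delta$, $\nabla\Delta$ via the frequency identity $\xi=(\xi-\eta)+\eta$) and then invoke Lemma~\ref{xF} term by term. You are in fact more explicit than the paper about the final reduction $\|\pt^\beta v_\infty\|_Z\lesssim\|v_\infty\|_X$, which the paper leaves entirely implicit; your handling of the weight commutators is the right way to justify it.
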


\vskip2mm

\begin{proof}[Proof of Lemma \ref{cor:v2}] 
By Lemma \ref{lemL} (\ref{linear2}), we have
\begin{eqnarray*}
\|v_1(t)\|_{W_x^{3,\infty}}
=\|V(t)v_{\infty}\|_{W_x^{3,\infty}}
\lesssim t^{-1}
\|\pt_{x_1}^{-\frac12}\pt_{x_2}^{-\frac12}v_{\infty}\|_{W_x^{3,1}},
\end{eqnarray*}
which yields (\ref{v11}).

To show (\ref{v22}), we note
\begin{eqnarray*}
\|v_2(t)\|_{H_x^3}
\lesssim\|v_2(t)\|_{L_x^2}
+\|\nabla v_2(t)\|_{L_x^2}
+\|\Delta v_2(t)\|_{L_x^2}
+\|\nabla\Delta v_2(t)\|_{L_x^2}.
\end{eqnarray*}
We define
\begin{eqnarray*}
B(f,g):=(\pt_{x_1}+\pt_{x_2})\int_t^{\infty}V(t-\tau)
\left[(V(\tau)f)(V(\tau)g)\right]d\tau.
\end{eqnarray*}
Since $\xi=(\xi-\eta)+\eta$, we see
\begin{eqnarray*} 
\nabla B(f,g)&=&
{{\mathcal F}}_{\xi\mapsto x}^{-1}
\left[(\xi_1+\xi_2)\xi e^{it(\xi_1^3+\xi_2^3)}
\int_t^{\infty}\!\!\!\int_{\rre^2}e^{-i\tau\phi(\xi,\eta)}
\widehat{f}(\xi-\eta)\widehat{g}(\eta)
d\eta d\tau\right](x)\\
&=&
{{\mathcal F}}_{\xi\mapsto x}^{-1}
\left[(\xi_1+\xi_2)e^{it(\xi_1^3+\xi_2^3)}
\int_t^{\infty}\!\!\!\int_{\rre^2}e^{-i\tau\phi(\xi,\eta)}
(\xi-\eta)\widehat{f}(\xi-\eta)\widehat{g}(\eta)
d\eta d\tau\right](x)\\
& &
+{{\mathcal F}}_{\xi\mapsto x}^{-1}
\left[(\xi_1+\xi_2)e^{it(\xi_1^3+\xi_2^3)}
\int_t^{\infty}\!\!\!\int_{\rre^2}e^{-i\tau\phi(\xi,\eta)}
\widehat{f}(\xi-\eta)\eta\widehat{g}(\eta)
d\eta d\tau\right](x)\\
&=&B(\nabla f,g)+B(f,\nabla g).
\end{eqnarray*} 
In a similar way, using the identity 
$|\xi|^2=|\xi-\eta|^2+2(\xi-\eta)\cdot\eta+|\eta|^2$, 
we have
\begin{eqnarray*}
\Delta B(f,g)&=&B(\Delta f,g)+2B(\nabla f,\nabla g)+B(f,\Delta g),\\
\nabla\Delta B(f,g)&=&B(\nabla\Delta f,g)+B(\Delta f,\nabla g)+
2\sum_{j=1}^2B(\nabla \pt_{x_j}f,\pt_{x_j} g)\\
& &
+2\sum_{j=1}^2B(\pt_{x_j}f,\nabla \pt_{x_j} g)
+B(\nabla f,\Delta g)+B(f,\nabla\Delta g).
\end{eqnarray*}
Combining Lemma \ref{xF} (\ref{xFe}) with the above 
identities, we have (\ref{v22}).
\end{proof}

%
%
\section{Proof of Theorem \ref{main}}

In this section we complete the proof of Theorem \ref{main}.

\begin{proof}[Proof of Theorem \ref{main}.] 
To prove Theorem \ref{main}, we show the existence 
of solution $w$ to (\ref{ZK11}) with $w\to0$ in $H^2(\rre^2)$ 
as $t\to\infty$. 

Let 
\begin{eqnarray*}
N(w,v_1,v_2)
&:=&
(\pt_{x_1}+\pt_{x_2})(w^2)
+2
(\pt_{x_1}+\pt_{x_2})
\left\{(v_1+v_2)w\right\}\\
& &+
(\pt_{x_1}+\pt_{x_2})(2v_1v_2+v_2^2).
\end{eqnarray*}
Then (\ref{ZK11}) can be rewritten as 
\begin{eqnarray} 
\partial_tw+(\pt_{x_1}^3+\pt_{x_2}^3)w
=N(w,v_1,v_2).
\label{ZK12}
\end{eqnarray}
To show the existence of solution $w$ to (\ref{ZK11}) 
with $w\to0$ in $H^2(\rre^2)$, we consider 
the regularized equation associated with (\ref{ZK12}) : 
\begin{eqnarray}
\partial_tw_{\nu,\lambda}
+(\pt_{x_1}^3+\pt_{x_2}^3)w_{\nu,\lambda}
=(1+\lambda t)^{-5}\rho_{\nu}\ast 
N(\rho_{\nu}\ast w,\rho_{\nu}\ast v_1,\rho_{\nu}\ast v_2),
\label{regular}
\end{eqnarray}
where $0<\lambda,\nu<1$, 
$\rho\in C_0^{\infty}(\rre^2)$ satisfies $\rho\ge0$ 
and $\int\rho(x)dx=1$ and $\rho_{\nu}(x)=
\nu^{-2}\rho(x/\nu)$. 

Thanks to the regularizing factor $\rho_{\nu}\ast$ 
and the time decaying factor $(1+\lambda t)^{-5}$, by using the contraction 
mapping principle, we easily see that for any 
$0<\nu<1$ and $0<\lambda<1$, there exists a 
$T_{\nu,\lambda}>0$ such that (\ref{regular}) has a 
unique solution $w_{\nu,\lambda}$ satisfying 
\begin{eqnarray*}
& &w_{\nu,\lambda}\in\bigcap_{j=1}^{\infty}
C^1([T_{\nu,\lambda},\infty),H_{x}^j),\\
& &\sup_{t\ge T_{\nu,\lambda}}(1+\lambda t)^4\sum_{3i+j\le 3}
\|\pt_t^i\nabla_{x}^jw_{\nu,\lambda}(t)\|_{L_{x}^2}<+\infty.
\end{eqnarray*}
Again using the regularizing and time decaying factors, the above 
solution $w_{\nu,\lambda}$ can be extend to $[0,\infty)$ 
without the smallness assumption on $v_{\infty}$.

We next derive an a priori estimates for $w_{\nu,\lambda}$ 
independent of $\nu$ and $\lambda$ under the assumption 
that $\|v_{\infty}\|_X\le\varepsilon$ with $\varepsilon>0$ 
sufficiently small. 
We abbreviate $w_{\nu,\lambda}$ to $w$. Let 
\begin{eqnarray*}
\|w\|_{X_T}
:=\sup_{t\in[T,\infty)}t^{\alpha}
\left(\|w(t)\|_{H_{x}^2}
+\|w(\tau)\|_{L_{\tau}^3(t,\infty;W_{x}^{1,\infty})}
\right),
\end{eqnarray*}
where $\alpha>0$ and $T>0$. 
We first derive the estimates for $w$ in $H_{x}^2$. 
Classical energy method implies
\begin{eqnarray*}
\|w(t)\|_{H_{x}^2}^2
&\lesssim&\int_t^{+\infty}
(\|w(\tau)\|_{W_{x}^{1,\infty}}
+\|v_1(\tau)\|_{W_{x}^{3,\infty}}
+\|v_2(\tau)\|_{H_x^3})
\|w(\tau)\|_{H_{x}^2}^2d\tau\\
& &+\int_t^{+\infty}
(\|v_1(\tau)\|_{W_{x}^{3,\infty}}\|v_2(\tau)\|_{H_x^3}
+\|v_2(\tau)\|_{H_x^3}^2)
\|w(\tau)\|_{H_{x}^2}d\tau.
\end{eqnarray*} 
By the H\"older inequality and Lemma \ref{cor:v2}, we obtain 
\begin{eqnarray}
\|w(t)\|_{H_{x}^2}^2&\lesssim&
\|w(\tau)\|_{L_{\tau}^3(t,\infty;W_{x}^{1,\infty})}
\|w(\tau)\|_{L_{\tau}^{3}(t,\infty;H_{x}^2)}^2
\nonumber\\
& &+\varepsilon\int_t^{+\infty}\tau^{-1}
\|w(\tau)\|_{H_{x}^2}^2d\tau
+\varepsilon^3\int_t^{+\infty}\tau^{-2}
\|w(\tau)\|_{H_{x}^2}d\tau
\nonumber\\
&\lesssim&t^{-3\alpha+\frac23}
\left(\sup_{t\in[T,\infty)}t^{\alpha}\|w(t)\|_{H_{x}^2}\right)^2
\left(\sup_{t\in[T,\infty)}t^{\alpha}
\|w(\tau)\|_{L_{\tau}^3(t,\infty;W_{x}^{1,\infty})}\right)
\nonumber\\
& &
+\varepsilon t^{-2\alpha}
\left(\sup_{t\in[T,\infty)}t^{\alpha}\|w(t)\|_{H_{x}^2}\right)^2
+\varepsilon^3t^{-\alpha-1}
\left(\sup_{t\in[T,\infty)}t^{\alpha}\|w(t)\|_{H_{x}^2}\right)
\nonumber\\
&\lesssim&t^{-3\alpha+\frac23}\|w\|_{X_T}^3
+\varepsilon t^{-2\alpha}\|w\|_{X_T}^2
+\varepsilon^3t^{-\alpha-1}\|w\|_{X_T},
\nonumber
\end{eqnarray}
where the implicit constants are independent of $\lambda$ and $\nu$. 
Hence
\begin{eqnarray}
\sup_{t\in[T,\infty)}t^{\alpha}\|w(t)\|_{H_{x}^2}
\lesssim T^{-\alpha+\frac23}\|w\|_{X_T}^2
+\varepsilon \|w\|_{X_T}+\varepsilon^3 T^{\alpha-1}.
\label{5.12}
\end{eqnarray}
Next we derive the estimates for $w$ 
in $L^3(t,\infty;W_{x}^{1,\infty})$. 
Since $w$ satisfies 
\begin{eqnarray*}
w(t)&=&-
(\pt_{x_1}+\pt_{x_2})
\int_t^{\infty}V(t-\tau)\left[w(\tau)^2\right]d\tau\\
& &-2
(\pt_{x_1}+\pt_{x_2})\int_t^{\infty}V(t-\tau)
\left[(v_1(\tau)+v_2(\tau))w(\tau)\right]d\tau\\
& &-
(\pt_{x_1}+\pt_{x_2})\int_t^{\infty}V(t-\tau)
\left[2v_1(\tau)v_2(\tau)+v_2(\tau)^2\right]d\tau, 
\end{eqnarray*}
applying the Strichartz estimates (Lemma \ref{lemL} (ii)), 
we have
\begin{eqnarray}
\|w(\tau)\|_{L_{\tau}^3(t,\infty;W_{x}^{1,\infty})}
&\lesssim&\|w(\tau)^2\|_{L_{\tau}^{\frac32}(t,\infty;H_{x}^{2,1})}
+\left\|(v_1(\tau)+v_2(\tau))w(\tau)\right\|_{L_{\tau}^1(t,\infty;H_{x}^2)}
\label{cF}\\
& &
+\|2v_1(\tau)v_2(\tau)+v_2(\tau)^2\|_{L_{\tau}^1(t,\infty;H_{x}^{2})}.
\nonumber
\end{eqnarray}
By the H\"{o}lder inequality, 
\begin{eqnarray}
\|w(\tau)^2\|_{L_{\tau}^{\frac32}(t,\infty;W_{x}^{2,1})}
&\lesssim&\|\|w(\tau)\|_{H_{x}^2}^2\|_{L_{\tau}^{\frac32}(t,\infty)}
\label{t1}\\
&\lesssim&
\left(\sup_{t\in[T,\infty)}t^{\alpha}\|w(t)\|_{H_{x}^2}\right)^2
\|\tau^{-2\alpha}\|_{L_{\tau}^{\frac32}(t,\infty)}
\nonumber\\
&\lesssim&t^{-2\alpha+\frac{2}{3}}
\left(\sup_{t\in[T,\infty)}t^{\alpha}\|w(t)\|_{H_{x}^2}\right)^2
\nonumber\\
&\lesssim&t^{-2\alpha+\frac{2}{3}}\|w\|_{X_T}^2,
\nonumber
\end{eqnarray}
where $\alpha>2/3$. 
By the H\"{o}lder inequality and Lemma \ref{cor:v2}, 
\begin{eqnarray}
\lefteqn{\|(v_1(\tau)+v_2(\tau))w(\tau)\|_{L^1(t,\infty;H_{x}^2)}}
\qquad\qquad
\label{t2}\\
&\lesssim&
\left\|(\|v_1(\tau)\|_{H_{x}^{2,\infty}}
+\|v_2(\tau)\|_{H_x^2})
\|w(\tau)\|_{H_{x}^2}\right\|_{L_{\tau}^1(t,\infty)}
\nonumber\\
&\lesssim&\varepsilon
\left(\sup_{t\in[T,\infty)}t^{\alpha}\|w(t)\|_{H_{x}^2}\right)
\|\tau^{-1-\alpha}\|_{L_{\tau}^1(t,\infty)}
\nonumber\\
&\lesssim&\varepsilon t^{-\alpha}
\left(\sup_{t\in[T,\infty)}t^{\alpha}\|w(t)\|_{H_{x}^2}\right)
\nonumber\\
&\lesssim&\varepsilon t^{-\alpha}
\|w\|_{X_T},\nonumber\\
\lefteqn{\|2v_1(\tau)v_2(\tau)+v_2(\tau)^2\|_{L^1(t,\infty;H_{x}^2)}}
\qquad\qquad
\label{t3}\\
&\lesssim&
\left\|\|v_1(\tau)\|_{H_{x}^{2,\infty}}
\|v_2(\tau)\|_{H_x^2}
+\|v_2(\tau)\|_{H_x^2}^2\right\|_{L_{\tau}^1(t,\infty)}
\nonumber\\
&\lesssim&\varepsilon^3\|\tau^{-2}\|_{L_{\tau}^1(t,\infty)}
\nonumber\\
&\lesssim&\varepsilon^3t^{-1}.
\nonumber
\end{eqnarray} 
Substituting (\ref{t1}), (\ref{t2}), (\ref{t3}) into (\ref{cF}), we have
\begin{eqnarray}
\sup_{t\in[T,\infty)}t^{\alpha}\|w(\tau)\|_{L_{\tau}^3(t,\infty;W_{x}^{1,\infty})}
\lesssim
T^{-\alpha+\frac{2}{3}}\|w\|_{X_T}^2
+\varepsilon\|w\|_{X_T} +\varepsilon^3T^{\alpha-1}.
\label{5.4}
\end{eqnarray}
where the implicit constants are independent of 
$\nu$ and $\lambda$.

By (\ref{5.12}) and (\ref{5.4}), we have 
\begin{eqnarray*}
\|w\|_{X_T}\lesssim 
T^{-\alpha+\frac{2}{3}}\|w\|_{X_T}^2
+\varepsilon\|w\|_{X_T}+\varepsilon^2T^{\alpha-1}.
\end{eqnarray*}
Hence, there exist $\varepsilon>0$ and $T>0$ 
which are independent of $\lambda$ and $\nu$ such that 
for any $0<\lambda<1$ and $0<\nu<1$,
\begin{eqnarray}
\|w\|_{X_T}=\sup_{t\ge T}t^{\alpha}(\|w(t)\|_{H_{x}^2}+
\|w(\tau)\|_{L_{\tau}^3(t,\infty;W_{x}^{1,\infty})})
\le 2\epsilon,\label{5.5}
\end{eqnarray}
where $2/3<\alpha<1$. 
By combining a priori estimate (\ref{5.5}) with 
the standard compactness argument 
(see \cite[Section 3]{OT} for instance), we find that 
there exists a unique solution 
$v\in C([T,\infty) ; H_x^1(\rre))$ to (\ref{ZK}) 
which satisfies $\|w\|_{X_T}
=\|v-v_1-v_2\|_{X_T}<2\varepsilon$.  
By conservation of the energy (\ref{energy}),  
we see $v\in C(\rre ; H^1(\rre))$. 
Furthermore, from the above inequality 
and Lemma \ref{cor:v2}, we see
\begin{eqnarray*}
\|v(t)-V(t)v_{\infty}\|_{H_x^2}&\lesssim&
\|w(t)\|_{H_x^2}+\|v_2(t)\|_{H_x^2}\\
&\lesssim&
\varepsilon t^{-\alpha}+\varepsilon^2 t^{-1}\\
&\lesssim&\varepsilon t^{-\alpha}.
\end{eqnarray*}
This completes 
the proof of Theorem \ref{main}. \end{proof}


\vskip2mm

\subsection*{Acknowledgements} 
J.S was supported by JSPS KAKENHI Grant Numbers 
JP25H00597 and  JP23K20805.

\end{document}